\def\symbol#1{\textcolor{symbols}{#1}}
\def\1{\mathbf{\symbol{1}}}
\newtheorem{question}{Question}
\definecolor{darkred}{rgb}{0.9,0.1,0.1}
\newcommand{\e}{\varepsilon}
\def\${|\!|\!|}
\def\E{\mathbf{E}}
\def\P{\mathbf{P}}
\newcommand*{\Cdot}{{\raisebox{-0.5ex}{\scalebox{1.8}{$\cdot$}}}} 
\acrodef{SHE}[SHE]{Stochastic Heat Equation}
\acrodef{KPZ}[KPZ]{Kardar--Parisi--Zhang}
\acrodef{ASEP}[ASEP]{Asymmetric Simple Exclusion Process}
\acrodef{BDG}[BDG]{Burkholder--Davis--Gundy}
\newcommand{\ic}{\text{ic}} 
\begin{document}

\title{ASEP($ q,j $) converges to the KPZ equation}
\author{Ivan Corwin$^1$, Hao Shen$^2$ and Li-Cheng Tsai$^3$}
\institute{Columbia University, US, \email{ivan.corwin@gmail.com}
\and Columbia University, US, \email{pkushenhao@gmail.com}
\and Columbia University, US, \email{lctsai.math@gmail.com}}

\maketitle

\begin{abstract}
We show that a generalized Asymmetric Exclusion Process called ASEP($ q,j $)
introduced in \cite{GASEP} converges to the Cole-Hopf solution to the KPZ equation under weak asymmetry scaling.
\end{abstract}



\section{Introduction}
In this paper we
study the generalized Asymmetric Exclusion Process called ASEP($ q,j $)
introduced in \cite{GASEP}, and show that
under the weak asymmetry scaling,
it  converges to the Cole-Hopf solution to the KPZ equation:
\begin{equ}\label{e:KPZ}
	\partial_T H
=
	\tfrac12 \Delta H + \tfrac12 ( \partial_X H )^2 + \dot W,
\end{equ}
where $\dot W$ is the space-time white noise: formally,
$\E(\dot W_T(X) \dot W_S(X'))=\delta(T-S) \delta(X-X')$.
Here the Cole-Hopf solution is defined by
$H_T(X) =  \log \mathscr Z_T(X) $
where $\mathscr Z  \in C([0,\infty),C(\R))$ is the mild solution (see \eqref{e:SHE-mild} below) to the stochastic heat equation (SHE)
\begin{equ} \label{e:SHE}
\partial_T \mathscr Z = \tfrac12 \Delta \mathscr Z + \mathscr Z \dot W\;.
\end{equ}

For the standard ASEP model, Bertini and Giacomin \cite{BG} proved its convergence in the weak asymmetry regime to the Cole-Hopf solution of the KPZ equation.
They assumed near equilibrium initial data,
and narrow wedge initial data was treated in \cite{MR2796514}.
Both of these results rely on the G{\"a}rtner transformation \cite{MR931030,dittrich91}, which is the discrete analogue of Cole-Hopf transformation. 
Recently there has been a resurgence of interest in showing that a large class of one-dimensional weakly asymmetric interacting particle system (including ASEP) should all converge to the KPZ equation. Besides the work of \cite{BG,MR2796514} (and previous to the present work), the only other result of this type via G{\"a}rtner / Cole-Hopf transform is due to \cite{DemboTsai}, 
wherein they show KPZ equation convergence for a class of weakly asymmetric non-simple exclusion processes with hopping range at most 3. 
Another work which was posted slightly after our present article is by Labb\'e \cite{labbe2016weakly,labbe2016scaling} who showed that in particular range of scaling regimes the fluctuations of the weakly asymmetric bridges converge to the KPZ equation, also via the method of G{\"a}rtner transform.

Another approach to proving KPZ equation limits for particle systems is
via energy solutions, and many microscopic models have been shown to converge to energy solutions to the KPZ equation \cite{MR3176353, goncalves15, MR3327509,MR3537337,gonccalves2015second,gonccalves2016stochastic,diehl2016kardar}, see also the lecture notes \cite{MR3445609}.
Energy solutions are proved to be unique in \cite{gubinelli15}.
The energy solution method currently only applies in equilibrium and  one needs to know the invariant measure as well as other hydrodynamic quantities explicitly.
The ASEP($q,j$) model considered presently does not have simple product form
 invariant measures, so it seems to us that the energy solution method does not apply for this model.

There are other types of systems which converge under certain weak scalings of parameters to the KPZ equation. For instance, 
\cite{AKQ,MR3189070} demonstrated KPZ convergence for the free energy of directed polymers with arbitrary disorder distributions in the intermediate  disorder regime (also called weak noise scaling).
Also,
\cite{2015arXiv1505} showed that the stochastic higher-spin vertex models introduced by \cite{CPHigherSpin} converge to KPZ under a particular weak scaling of their parameter $q\to 1$.   
The paper \cite{gubinelli2015kpz} proved the convergence of the Sasamoto-Spohn type discretizations (\cite{MR2570756}) of the KPZ/stochastic Burgers equation using paracontrolled analysis.
We also mention the recent results in the continuum setting by \cite{KPZJeremy} and \cite{CLTKPZ} using regularity structure theory, and by \cite{gubinelli2016hairer} using energy solution in the equilibrium.

The system we focus on in this paper is the ASEP($q,j$) which was introduced in \cite{GASEP} as a generalization of ASEP which allows multiple occupancy at each site (i.e., a higher spin version of ASEP). ASEP($q,j$) reduces to the usual ASEP when $j=1/2$. This class of systems was introduced through an algebraic machinery developed to construct particle systems which enjoy a certain self-duality property. The simplest case of self-duality (duality to a one-particle dual system) implies that the expectation of $q$ raised to the current of the system solves the Kolmogorov backward equation for a single particle version of the model (see Lemma 3.1 of \cite{GASEP}). This suggested to us that if we do not take expectations, the same observable might satisfy a discrete \ac{SHE}. Indeed, after writing this down, we are able to demonstrate such a discrete version of the Cole-Hopf a.k.a. G{\"a}rtner transform. We then employ methods similar to that of \cite{BG} to ultimately prove convergence of the continuum \ac{SHE}. We also remark on a similar G{\"a}rtner transform structure for the recently introduced ASIP($q,k$) \cite{CGRS2015} but do not provide a proof of convergence to KPZ for that process.

\subsection{Definition of the model and the main results}

For $q\in(0,1)$  and $ n\in\Z $, the $q$-number is defined as
\begin{equ} \label{q-num}
[n]_q=\frac{q^n-q^{-n}}{q-q^{-1}}
\end{equ}
satisfying the property $\lim_{q\to1} [n]_q = n$.
We recall the following definition of ASEP($q,j$) from \cite{GASEP}.

%
%

\begin{definition} \label{def:ASEPqj} 
Fix $q\in(0,1)$ and a half integer $j\in\N/2$.
Let $ \widetilde{\eta}(x) \in \{0,1,\ldots,2j\} $ denote
the occupation variable, i.e.\ the number of particles, at site $ x\in\Z $.
The ASEP($q,j$) is a continuous-time Markov process on the state space
$ \{0,1\ldots,2j\}^\Z = \{ (\widetilde{\eta}(x))_{x\in\Z} \} $
defined by the following dynamics:
at any given time $ t\in[0,\infty) $,
a particle jumps from site $ x $ to site $ x+1 $
at rate
\footnote{A factor $\frac{1}{2[2j]_q}$ is inserted here (comparing with \cite{GASEP}), which is unimportant but will make the coefficient in front of the Laplacian of the heat equation $\frac12$ for convenience, so that we can employ the standard heat kernel estimates.}
\begin{align*}
	\widetilde{c}_q^+(\widetilde{\eta},x)
	&= \frac{1}{2[2j]_q} q^{\widetilde{\eta}(x)-\widetilde{\eta}(x+1)-(2j+1)}
	[\widetilde{\eta}(x)]_q [2j-\widetilde{\eta}(x+1)]_q
\end{align*}
and from site $ x+1 $ to site $ x $ at rate
\begin{align*}
	\widetilde{c}_q^-(\widetilde{\eta},x)
	&= \frac{1}{2[2j]_q} q^{\widetilde{\eta}(x)-\widetilde{\eta}(x+1)+(2j+1)}
	[2j -\widetilde{\eta}(x)]_q [\widetilde{\eta}(x+1)]_q
\end{align*}
independently of each other.
With $ [0]_q = 0 $,
the property $ \widetilde{\eta}(x) \in \{0,1,\ldots,2j\} $
is clearly preserved by the dynamics described in the proceeding,
and with $ \widetilde{c}_q^\pm(\Cdot,\Cdot) $ being uniformly bounded,
such a process is constructed by the standard procedures as in \cite{liggett12}.
\end{definition}

Focusing on the fluctuation around density $ j $,
we define the centered occupation variable
$ \eta(x) := \widetilde{\eta}(x) -j \in \{-j,\ldots,j\} $
and the corresponding jumping rate
\begin{align}\label{e:rates}
\begin{split}
	c_q^+(\eta,x)&= \frac{1}{2[2j]_q} q^{\eta(x)-\eta(x+1)-(2j+1)}
	[j+\eta(x)]_q [j-\eta(x+1)]_q 
\\
	c_q^-(\eta,x) &= \frac{1}{2[2j]_q} q^{\eta(x)-\eta(x+1)+(2j+1)}
	[j -\eta(x)]_q [j+\eta(x+1)]_q \;.
\end{split}
\end{align}
Under these notations, the ASEP$(q,j)$ has the generator
\begin{equ}\label{genZ}
({\cal L}f)(\eta) =\sum_{x\in \Z} ({\cal L}_{x,x+1}f)(\eta)
\end{equ}
where
\begin{equ} \label{gen}
({\cal L}_{x,x+1}f)(\eta)
 =  c^+_q (\eta, x) \, \big(f(\eta^{x,x+1}) - f(\eta)\big)
 + c^-_q (\eta, x) \, \big(f(\eta^{x+1,x}) - f(\eta)\big)
\end{equ}
and $ \eta^{x,y} $ is the configuration obtained by
moving a particle from site $x$ to site $y$.


For any function $f:\Z \to \R $,
define the forward and backward discrete gradients as
\[
\nabla^+ f(x) \eqdef f(x+1)-f(x) \;, \qquad
\nabla^- f(x) \eqdef f(x-1)-f(x) \;.
\]
Define the height function $h$ so that $\nabla^+ h(x)=\eta(x+1)$.
More precisely, let
$h_t(0)$ be the net flow of particles from $x=1$ to
$x=0$ during the time interval $[0,t]$,
counting \emph{left-going} particles as positive,
and
\begin{align}
	\label{e:def-h}
	h_t(x) & \eqdef
	h_t(0) +
	\left\{\begin{array}{c@{,}l}
		\sum_{0<y \le x} \eta_t(y)	& \text{ when } x \ge 0 \;,\\
		-\sum_{x<y\le 0} \eta_t(y)	& \text{ when } x<0 \;.
	\end{array}\right.
\end{align}
We define the microscopic Hopf-Cole / G{\"a}rtner transform
of the height function $ h_t(x) $ as
\begin{equ}
	\label{e:Z-defn}
	Z_t(x) \eqdef q^{-2 h_t(x)+\nu t} \;
\end{equ}
where the term $ \nu t $ is to balance the overall linear (in time) growth of $ h_t(x) $, with
\begin{align}\label{nu}
	\nu \eqdef \big( \tfrac{[4j]_q}{2[2j]_q} -1 \big) / \ln q\;.
\end{align}
We linearly interpolate $ Z_t(x) $ in $ x\in\R $ so that $ Z \in D([0,\infty),C(\R)) $,
the space of $ C(\R) $-valued, right-continuous-with-left-limits processes.

Turning to our main result,
we consider the weakly asymmetric scaling $ q = q_\eps = e^{-\sqrt \eps} $, $ \eps \to 0 $,
whereby $ \nu = \nu_\eps = -2j^2 \sqrt\eps +O(\eps)$.
To indicate this scaling,
we denote \emph{parameters} such as $ \nu $ by $ \nu_\e $,
but for \emph{processes} such as $ h_t(x) $ and $ M_t(x) $,
we often omit the dependence on $ \eps $ to simplify notations.
Following \cite{BG}, we consider the following
near equilibrium initial conditions:
\begin{definition}\label{def:nearEq}
Let $\|f_t(x)\|_n \eqdef (\E|f_t(x)|^n)^{\frac1n}$ denote the $L^n$-norm.
We say a sequence $ \{h^\eps_0(\Cdot)\}_\e $
of initial conditions is near equilibrium if,
for any $\alpha \in(0,\frac12)$ and every $n\in\N$
there exist finite constants $C$ and $a$ such that
\begin{equs}
	\label{e:init-uniform}
	\Vert Z_0(x) \Vert_n
	&\le C e^{a\eps |x|} \;,
	\\
	\label{e:init-Holder}
	\Vert Z_0(x)-Z_0(x')\Vert_n
	&\le C (\eps|x-x'|)^\alpha e^{a\eps(|x|+|x'|)} \;.
\end{equs}
\end{definition}
%
%
%
%

%
Recall that $ \mathscr Z_T(X) $ is the solution to the \ac{SHE} \eqref{e:SHE}
starting from $ \mathscr Z_0(\Cdot) \in C(\R) $ if
\begin{equ} \label{e:SHE-mild}
 \mathscr Z_T = \mathscr P_T *\mathscr Z_0
 	 + \int_0^T \mathscr P_{T-S} * (\mathscr Z_S \, dW_S)
\end{equ}
where $ \mathscr P$ is the standard heat kernel,
and the last integral is in It\^o sense and $*$ denotes the spatial convolution.
Hereafter, we endow the space $D([0,\infty),C(\R))$ with the Skorokhod topology
and the space $ C(\R) $ with the topology of uniform convergence on compact sets,
and use $ \Rightarrow $ to denote weak convergence of probability laws.
Write $\eps_j\eqdef 2j\eps$ and consider the scaled processes
\begin{equ}\label{e:calZ}
	\mathcal Z_{T}^\eps(X)
	\eqdef Z_{\eps_j^{-2} T}(\eps_j^{-1} X) \in D([0,\infty),C(\R)) \;.
\end{equ}
The following is our main theorem.

\begin{theorem}\label{thm:main}
Let $ \mathscr Z^\ic \in C(\R) $ and $ \mathscr Z $
be the unique solution to \ac{SHE} from $ \mathscr Z^\ic $.
Given any near equilibrium initial conditions
such that $ Z^\e_0 \Rightarrow \mathscr Z^\ic $, as $ \e \to 0 $,
under the preceding weakly asymmetric scaling,
we have that $ \mathcal Z^\eps \Rightarrow \mathscr Z $,
as $ \eps \to 0 $.
\end{theorem}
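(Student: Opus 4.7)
The plan is to follow the Bertini--Giacomin strategy, adapted to the higher-spin structure of ASEP($q,j$). The crucial first step is to apply the generator $\mathcal{L}$ to the G{\"a}rtner transform $Z_t(x) = q^{-2 h_t(x) + \nu t}$ and derive a microscopic stochastic heat equation of the form
\begin{equ}
dZ_t(x) = \tfrac12 \Delta^{\!\eps} Z_t(x)\, dt + r_t(x)\, dt + dM_t(x),
\end{equ}
where $\Delta^{\!\eps}$ is a discrete Laplacian that, under the rescaling $T = \eps_j^2 t$, $X = \eps_j x$, converges to $\tfrac12 \Delta$; $M_t(x)$ is a square-integrable martingale; and $r_t(x)$ is a residual drift to be controlled. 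The algebraic core is to verify, using the explicit rates in \eref{e:rates} and the $q$-number addition identity $[a+b]_q = [a]_q q^{b} + [b]_q q^{-a}$, that with the balancing constant $\nu$ from \eref{nu} the drift $\mathcal{L} Z$ factorizes as a discrete Laplacian in $Z$ plus a small residual. This is the higher-spin analog of the exact identity used in \cite{BG} for $j = \tfrac12$, and should be derivable by direct computation from Definition~\ref{def:ASEPqj}.

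The second step is to pass to the mild form
\begin{equ}
Z_t(x) = \sum_y p^\eps_t(x,y)\, Z_0(y) + \int_0^t \sum_y p^\eps_{t-s}(x,y)\, \bigl( r_s(y)\, ds + dM_s(y) \bigr),
\end{equ}
where $p^\eps$ is the semigroup of $\Delta^{\!\eps}$. Gaussian estimates on the discrete kernel, the near-equilibrium hypothesis \eref{e:init-uniform}, and a Gronwall argument for moments then yield uniform bounds of the form $\Vert Z_t(x)\Vert_n \le C e^{a\eps|x|}$ on rescaled compact time intervals. The quadratic variation $\langle M(x), M(y)\rangle_t$ should work out to be essentially diagonal in $(x,y)$ with mass of order $\eps_j \int_0^t Z_s^2(x)\, ds$, matching the noise of \eref{e:SHE} after rescaling. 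Combining the mild equation with the moment bounds then produces spatial and temporal H{\"o}lder estimates on $\mathcal{Z}^\eps$; together with \eref{e:init-Holder}, a Kolmogorov criterion yields tightness in $D([0,\infty), C(\R))$.

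To identify any subsequential limit with $\mathscr{Z}$, one passes to the limit in the mild equation: $p^\eps \to \mathscr{P}$ pointwise and in appropriate integrated senses, the residual drift contribution vanishes in probability, and the rescaled martingale has predictable bracket converging to $\int_0^T \mathcal{Z}_S(X)^2\, dS$ concentrated on the diagonal $X = X'$. A standard martingale central-limit argument then identifies the limiting noise as $\mathscr{Z}\, dW$, and uniqueness of the SHE with continuous initial data closes the proof.

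The main obstacle will be controlling the residual drift $r_t(x)$. For $j = \tfrac12$ the Bertini--Giacomin identity is exact and $r \equiv 0$, but for $j > \tfrac12$ the nonlinearities $[j \pm \eta(x)]_q$ produce a residual that is only of borderline size after the diffusive rescaling. The strategy is to express $r_t(x)$ as a polynomial in $q^{\pm 2 \eta(x)}$ and $q^{\pm 2 \eta(x+1)}$, i.e.\ in the ratios $Z_t(x)/Z_t(x-1)$ and $Z_t(x+1)/Z_t(x)$, and to show that after integration against the semigroup $p^\eps_{t-s}$ the offending terms either vanish to leading order by Taylor expansion in $\sqrt\eps$, or contribute additional martingales via a Dynkin-type cancellation, in analogy with the key estimate in \cite{DemboTsai} for range-$3$ non-simple exclusion processes.
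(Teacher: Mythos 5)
Your overall architecture (G\"artner transform, mild form, moment/H\"older bounds, tightness, martingale identification) matches the paper's, but you have misdiagnosed where the difficulty sits, and the misdiagnosis leaves a genuine gap. First, the residual drift $r_t(x)$ you plan to fight does not exist: the algebraic computation in Proposition~\ref{prop:HC}\ref{enu:HCdr} shows that with $\nu$ chosen as in \eqref{nu}, the identity $\Omega Z_t(x) = \frac12\Delta Z_t(x)$ is \emph{exact} for every half-integer $j$, not just $j=\tfrac12$ --- this exactness is precisely the point of the self-duality structure of ASEP($q,j$) and the reason the model admits a G\"artner transform at all. So your final paragraph, the ``main obstacle,'' is machinery aimed at a term that vanishes identically; had you carried out the computation with the rates \eqref{e:rates} you would have found $r\equiv 0$.

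The real borderline term, which your plan glosses over, sits in the martingale bracket. Your claim that $\langle M(x),M(y)\rangle_t$ is ``essentially diagonal with mass of order $\eps_j\int_0^t Z_s^2\,ds$, matching the noise'' is exactly the step that fails naively: by \eqref{e:ASEPbrac} the bracket is
\begin{equation*}
\mathbf{1}_{\{x=y\}}\Big(\tfrac{4\eps j^2}{[2j]_q} Z_t(x)^2 - \tfrac{1}{[2j]_q}\nabla^+ Z_t(x)\nabla^- Z_t(x) + o(\eps) Z_t(x)^2\Big),
\end{equation*}
and since $\nabla^\pm Z_t(x) = (2\eta\sqrt\eps + o(\sqrt\eps))Z_t(x)$, the gradient-product term is of size $\eps Z_t(x)^2$ --- the \emph{same} order as the main term, carrying the non-self-averaging fluctuation $\eta(x)\eta(x+1)$. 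Bounding its contribution $R_2^\eps(\varphi)$ to the quadratic variation of $N^\eps_T(\varphi)$ by brute force gives $O(1)$ after the diffusive rescaling, not $o(1)$. The paper closes this via the ``key estimate'' (Lemma~\ref{lem:key-est}): one exploits the smoothing of the conditional expectation $\E(\nabla^- Z_s\nabla^+ Z_s\mid\mathcal F_{s'})$ together with the heat-kernel cancellation $\sum_x\int_0^\infty \nabla^+ p_t(x)\nabla^- p_t(x)\,dt = 0$ (equation \eqref{e:identity0}, the analog of \cite[Lemma~A.1]{BG}, proved here in the generality of Lemma~\ref{lem:identity}), and then iterates the resulting self-referential inequality. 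Nothing in your proposal supplies this cancellation --- your invocation of a ``Dynkin-type cancellation'' is directed at the phantom drift, not at the bracket --- so as written the identification of the limiting noise as $\mathscr Z\,dW$ would not go through. A minor further difference: the paper identifies the limit via the martingale problem for the \ac{SHE} rather than by passing to the limit in the mild equation, but that choice is cosmetic compared to the missing key estimate.
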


Definition~\ref{def:nearEq} (and therefore Theorem~\ref{thm:main})
leaves out an important initial condition,
i.e.\ the step initial condition: 
\begin{equ} [e:step-init]
 \eta_0(x) = j  \quad \mbox{for }  x\leq 0 \;, \qquad
\mbox{and }  \eta_{0}(x) = -j  \quad \mbox{for }  x >0  \;.
\end{equ}
Following \cite{MR2796514}, we generalize Theorem~\ref{thm:main} to the following:

\begin{theorem}\label{thm:step}
Let $ \mathscr Z^* $ be the unique solution of \ac{SHE}
starting from the delta measure $ \delta(\Cdot) $,
let $ \{\eta_0(x)\}_{x} $ the step initial condition as in \eqref{e:step-init},
and let $ \mathcal Z^{*,\eps}_T(X) := \frac{1}{2\sqrt \eps} \mathcal Z_{T}^\eps(X)  $.
We have that $ \mathcal Z^{*,\eps} \Rightarrow \mathscr Z^* $,
as $ \eps \to 0 $.
\end{theorem}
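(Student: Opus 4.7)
The plan is to follow the strategy of \cite{MR2796514}, which treats the analogous step-initial-condition extension for standard ASEP, adapted to ASEP($q,j$) via the same discrete mild/martingale formalism that underlies the proof of Theorem~\ref{thm:main}. For the configuration \eqref{e:step-init} one has $h_0(x)=-j|x|$ (up to an additive constant), hence $Z_0(x)=e^{-2j\sqrt\e\,|x|}$, and under the scaling \eqref{e:calZ}
\[
	\mathcal Z^{*,\e}_0(X) \;=\; \tfrac{1}{2\sqrt\e}\, e^{-|X|/\sqrt\e},
\]
a probability density concentrating at $X=0$ as $\e\to 0$. This fails Definition~\ref{def:nearEq} near the origin, so Theorem~\ref{thm:main} cannot be applied directly, and one must replace \eqref{e:init-uniform} by heat-kernel-type bounds that blow up like $T^{-1/2}$ as $T\to 0^+$.

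\textbf{Step 1 (heat-kernel moment and Holder bounds).} Using the discrete mild/martingale decomposition of $Z_t(x)$ that underpins Theorem~\ref{thm:main}, I would prove that, uniformly in $\e\in(0,1)$,
\[
	\|\mathcal Z^{*,\e}_T(X)\|_n \;\le\; C_n\,(T\wedge 1)^{-1/2}\,\exp\big(-c X^2/T + a T\big),
\]
for all $T>0$, $X\in\R$, with constants $C_n,a,c<\infty$, and prove analogous estimates on $\|\mathcal Z^{*,\e}_T(X)-\mathcal Z^{*,\e}_S(X')\|_n$. Writing $\mathcal Z^{*,\e}_T = \mathcal P^\e_T*\mathcal Z^{*,\e}_0 + \mathcal M^\e_T$, the deterministic piece satisfies the target bound because $\mathcal Z^{*,\e}_0$ is a mollified delta; the martingale piece is handled by the \ac{BDG} inequality combined with discrete heat kernel estimates, the bound being closed by a Gronwall iteration on the resulting Volterra-type integral inequality.

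\textbf{Step 2 (tightness and identification).} The bounds from Step 1 give Kolmogorov-type Holder continuity of $\mathcal Z^{*,\e}$ on every rectangle $[T_0,T_1]\times[-M,M]$ with $T_0>0$, hence tightness of $\{\mathcal Z^{*,\e}\}_\e$ in $D([T_0,\infty),C(\R))$ for every $T_0>0$. The martingale-problem analysis that identifies the \ac{SHE} limit in Theorem~\ref{thm:main} uses only the local dynamics, so every subsequential limit $\widetilde{\mathscr Z}$ solves the \ac{SHE} on $[T_0,\infty)$ with initial datum $\widetilde{\mathscr Z}_{T_0}$. Step 1 also gives $\int_\R \widetilde{\mathscr Z}_T(X)\,\varphi(X)\,dX\to\varphi(0)$ as $T\to 0^+$ for every $\varphi\in C_c(\R)$, so $\widetilde{\mathscr Z}$ extends to an \ac{SHE} solution on $[0,\infty)$ starting from $\delta(\Cdot)$. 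Uniqueness of the \ac{SHE} from a delta initial datum then identifies $\widetilde{\mathscr Z}=\mathscr Z^*$.

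\textbf{Main obstacle.} The heart of the argument is Step 1: producing moment bounds that simultaneously capture the correct $T^{-1/2}$ singularity as $T\to 0^+$ and are uniform in $\e$. Because the noise driving the discrete G{\"a}rtner transform is multiplicative in $Z$, the BDG iteration yields a Volterra-type inequality whose kernel behaves like a squared heat kernel; retaining the spatial Gaussian decay while controlling the time singularity requires sharp versions of the discrete heat kernel estimates used in the proof of Theorem~\ref{thm:main}.
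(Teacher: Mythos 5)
Your proposal follows essentially the same route as the paper: your Step~1 is precisely the paper's Proposition~\ref{prop:stepEst} (proved there, as you suggest, from the mild form \eqref{e:intDSHE} via the \ac{BDG} bound of Lemma~\ref{lem:BDG} and iteration of the resulting Volterra-type inequality using the semigroup property), and your Step~2 is the restart/identification argument of \cite[Section~3]{MR2796514} that the paper invokes together with Theorem~\ref{thm:main} and the observation that $\mathcal Z^{*,\eps}_0(X) = \tfrac{1}{2\sqrt\eps}e^{-|X|/\sqrt\eps} \Rightarrow \delta(\Cdot)$. The only deviations are cosmetic overclaims within the same framework: the Gaussian factor $e^{-cX^2/T}$ you assert in Step~1 is stronger than the spatially uniform bound $C(\eps^2 t)^{-1/2}$ of \eqref{eq:momd} that the paper proves (and is not needed), and the identification $\int_\R \widetilde{\mathscr Z}_T(X)\varphi(X)\,dX \to \varphi(0)$ does not follow from the moment bound alone but requires the mild-form decomposition together with short-time smallness of the martingale contribution --- both of which your Step~1 machinery supplies.
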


\subsection{Proof of Theorems~\ref{thm:main} and \ref{thm:step}}

In Section~\ref{sect:tight}, we establish the following moment estimates.
\begin{proposition} \label{prop:Holder}
Fix $ \bar T <\infty $, $ n \in \N $, $ \alpha\in(0,1/2) $,
and some near equilibrium initial conditions as in Definition~\ref{def:nearEq},
with the corresponding finite constant $ a $.
Then, there exists some finite constant $ C $ such that
\begin{equs}
	\label{e:uniform}
	\Vert Z_t(x) \Vert_{2n} &\leq C e^{a\eps |x|}
\\
	\label{e:Holderx}
	\Vert Z_t(x)-Z_t(x')\Vert_{2n} &\le
	C (\eps |x-x'|)^\alpha e^{a\eps (|x|+|x'|)}
\\
	\label{e:Holdert}
	\Vert Z_t(x) - Z_{t'}(x) \Vert_{2n} &\le
	 C (1\vee|t'-t|^\frac{\alpha}{2}) \eps^\alpha e^{2a\eps |x|}
\end{equs}
for all $t,t'\in[0,\eps_j^{-2} \bar T]$ and $ x,x'\in\R $.
\end{proposition}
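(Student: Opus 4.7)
\medskip\noindent\textbf{Proof plan for Proposition~\ref{prop:Holder}.}
The plan is to follow the Bertini--Giacomin scheme \cite{BG}, adapted to the ASEP($q,j$) Gärtner transform $Z_t(x)=q^{-2h_t(x)+\nu t}$. The first step, which I expect to be established in a preceding lemma (the ``microscopic SHE''), is to write
\begin{equation*}
  dZ_t(x) \;=\; \mathcal{L}_\eps Z_t(x)\,dt + dM_t(x),
\end{equation*}
where $\mathcal{L}_\eps$ is a discrete second-order operator on $\eps_j^{-1}\Z$ that approximates $\tfrac12\Delta$ after the diffusive rescaling, the constant $\nu$ in \eqref{nu} is precisely what kills the would-be drift, and $M_t(x)$ is a martingale whose quadratic (co)variation satisfies a pointwise bound of the schematic form $d\langle M(x),M(y)\rangle_t \le C\,\eps\,\mathbf 1_{\{x=y\}}\,Z_t(x)^2\,dt$. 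Once this is in hand, Duhamel's formula reads
\begin{equation*}
  Z_t(x) \;=\; \sum_y p^{\eps}_{t}(x,y)\,Z_0(y)
  \;+\; \int_0^t\sum_y p^{\eps}_{t-s}(x,y)\,dM_s(y),
\end{equation*}
where $p^{\eps}_t$ is the semigroup of $\mathcal{L}_\eps$ and obeys the usual Gaussian-type heat-kernel bounds together with standard Hölder-in-space and Hölder-in-time estimates, after the scaling.

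For the uniform bound \eqref{e:uniform}, I introduce the weighted norm $\psi_n(t):=\sup_x e^{-a\eps|x|}\Vert Z_t(x)\Vert_{2n}$. Applying Minkowski to the first term in Duhamel and using that $p^\eps_t * e^{a\eps|\cdot|}\le Ce^{a\eps|x|}$ (valid uniformly for $t\in[0,\eps_j^{-2}\bar T]$ with $a$ possibly enlarged) controls the initial-data contribution by $C\Vert Z_0\Vert_n$-type quantities and \eqref{e:init-uniform}. For the stochastic integral I apply \ac{BDG}, which together with the quadratic-variation bound and the heat-kernel $L^2$ estimate $\sum_y p^\eps_{t-s}(x,y)^2\le C\eps\,(\eps_j^{-2}(t-s))^{-1/2}$ yields an inequality of the form
\begin{equation*}
  \psi_{n}(t)^2 \;\le\; C\,e^{C\eps^2 t}\,\Bigl(1 + \int_0^t (t-s)^{-1/2}\,\eps_j\,\psi_{n}(s)^2\,ds\Bigr),
\end{equation*}
to which a singular Gronwall argument applies on the macroscopic time scale, giving \eqref{e:uniform}.

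The spatial Hölder bound \eqref{e:Holderx} follows by the same template, but starting from $Z_t(x)-Z_t(x')$. The initial-data contribution is handled directly via \eqref{e:init-Holder}. For the martingale part one uses the discrete-kernel Hölder estimate
\begin{equation*}
  \sum_y \bigl(p^\eps_{t-s}(x,y)-p^\eps_{t-s}(x',y)\bigr)^2\,e^{2a\eps|y|}
   \;\le\; C\,\eps\,(\eps|x-x'|)^{2\alpha}\,(\eps_j^{-2}(t-s))^{-1/2-\alpha}\,e^{2a\eps(|x|+|x'|)},
\end{equation*}
which combined with BDG and the just-proven \eqref{e:uniform} gives \eqref{e:Holderx} after a second (finite) Gronwall step. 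The temporal bound \eqref{e:Holdert} is proved by splitting $Z_t-Z_{t'}$ into (i)~$(p^\eps_t-p^\eps_{t'})*Z_0$, (ii)~$\int_{t'}^t p^\eps_{t-s}*dM_s$, and (iii)~$\int_0^{t'}(p^\eps_{t-s}-p^\eps_{t'-s})*dM_s$, and then bounding each piece using the standard heat-kernel time-Hölder estimate together with BDG and \eqref{e:uniform}.

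\medskip\noindent\emph{Main obstacle.} The Bertini--Giacomin computation leading to the microscopic SHE hinges on algebraic cancellations that, for standard ASEP, come from $\eta(x)\in\{0,1\}$ making quadratic terms in $\eta$ collapse to linear ones. For ASEP($q,j$) the rates \eqref{e:rates} are products of $q$-numbers at the two neighboring sites, so the analogous identity requires identifying the right choice of exponent ($-2h_t$ rather than $-h_t$) and the right drift counterterm \eqref{nu}, and then verifying that the quadratic variation of the resulting martingale is indeed pointwise bounded by $O(\eps)Z^2$. I expect this algebraic step, rather than the subsequent SPDE analysis, to be the main difficulty; once it is secured, the heat-kernel/BDG/Gronwall machinery sketched above yields all three estimates in parallel.
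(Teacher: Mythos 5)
Your overall architecture --- take the microscopic SHE of Proposition~\ref{prop:HC} as input, write the Duhamel representation $Z_t = p_t * Z_0 + \int_0^t p_{t-s}*dM_s$ as in \eqref{e:intDSHE}, bound the initial-data term via \eqref{e:init-uniform}--\eqref{e:init-Holder} and heat-kernel estimates, and close the moment inequality by a singular Gronwall/iteration --- is exactly the paper's scheme (the paper works in microscopic coordinates with the unscaled kernel $p_t$ and iterates the convolution inequality using the semigroup property, which is equivalent to your weighted-norm Gronwall; your three-piece splitting for \eqref{e:Holdert} matches the paper's $I_1$-difference, $J_1$, $J_2$). But there is one genuine gap: you apply BDG as if $M$ were a continuous martingale controlled by its predictable bracket together with \eqref{eq:QVbd}. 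Here $M_t(x)$ is a compensated-Poisson, pure-jump martingale, and for $2n$-th moments with $n>1$ BDG controls $\Vert \int f\,dM\Vert_{2n}^2$ by the $L^n$-norm of the \emph{optional} quadratic variation $[\,\Cdot\,]$, a sum of squared jumps, which is \emph{not} dominated by the predictable bracket (with $\langle M\rangle$ one needs an extra term involving the supremum of the jumps, \`a la Lenglart--L\'epingle). The paper devotes Lemma~\ref{lem:BDG} (adapted from \cite[Lemma~3.1]{DemboTsai}) precisely to this point: it partitions $(t,t']$ into unit-length subintervals, bounds the jump counts $N_{\mathcal T_i}(x')$ stochastically by Poisson variables, and uses the almost-sure comparison \eqref{eq:Z:ss}, $\sup_{s\in(s_1,s_2]}Z_s(x')\le e^{2\sqrt{\eps}N}Z_{s_1}(x')$, plus independence, to convert the jump sum into the bound $C\eps\int \bar f_s^{\,2}*\Vert Z_s^2\Vert_n\,ds$; this is also where the local supremum $\bar f$, the restriction $t'-t\ge 1$, and the separate treatment of short times $t\le 1$ enter. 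As written, your "BDG + quadratic-variation bound" step yields only second moments and the higher-moment estimates \eqref{e:uniform}--\eqref{e:Holdert} would not follow.

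Two smaller points. First, in the time-H\"older bound your piece (i), $(p^\eps_t-p^\eps_{t'})*Z_0$, cannot be handled by the kernel time-H\"older estimate alone: $|p_{t'}(x)-p_t(x)|\le C(1\wedge t^{-1/2-\alpha})(t'-t)^{\alpha}$ degenerates as $t\to 0$, so one must invoke the spatial regularity of the data. The paper does this via the semigroup identity $I_1(t',x)-I_1(t,x)=\sum_{\bar x}p_{t'-t}(x-\bar x)\big(I_1(t,\bar x)-I_1(t,x)\big)$, reducing the time increment to the already-established spatial H\"older bound \eqref{e:I1Holderx} together with $\sum_x |x|^{\alpha}p_{t'-t}(x)e^{a\eps|x|}\le C|t'-t|^{\alpha/2}$. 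Second, the "main obstacle" you identify (the algebraic cancellation fixing the exponent $-2h_t$ and the counterterm \eqref{nu}) is real but belongs to Proposition~\ref{prop:HC}, which the statement under review takes as given; within the proof of Proposition~\ref{prop:Holder} itself, the essential input beyond standard SPDE machinery is the jump-martingale moment lemma described above.
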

Applying the argument as in \cite[Proof of Proposition~1.4]{DemboTsai}
(see also \cite[Proof of Theorem~3.3]{BG}),
we then have that Proposition~\ref{prop:Holder} implies the following tightness result
\begin{proposition}\label{prop:tight}
For near equilibrium initial conditions,
the law of $ \{\mathcal Z^\eps\}_\eps $
is tight in $ D([0,\infty)\times\R) $.
Moreover, limit points of $ \{\mathcal Z^\eps\}_\eps $ concentrates
on $ C([0,\infty)\times C(\R)) $.
\end{proposition}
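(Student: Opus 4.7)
The plan is to follow the two-parameter Kolmogorov--Chentsov route used in \cite[Proof of Proposition~1.4]{DemboTsai} and \cite[Proof of Theorem~3.3]{BG}: convert the $L^{2n}$ bounds of Proposition~\ref{prop:Holder} into uniform Hölder estimates for $\mathcal Z^\eps$ on each compact spatio-temporal box, and then exhaust $[0,\infty)\times\R$ by a diagonal argument, with the exponential weight controlling the tails in $X$.

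First, I would translate \eqref{e:uniform}--\eqref{e:Holdert} into the macroscopic variables $X=\eps_j x$, $T=\eps_j^2 t$ (recall $\eps_j=2j\eps$). The weight $e^{a\eps|x|}$ becomes $e^{a'|X|}$ for $a'=a/(2j)$, the spatial factor $(\eps|x-x'|)^\alpha$ becomes $((2j)^{-1}|X-X'|)^\alpha$, and the temporal prefactor $(1\vee|t-t'|^{\alpha/2})\eps^\alpha$ becomes $(2j)^{-\alpha}|T-T'|^{\alpha/2}$ when $|T-T'|>\eps_j^2$ and stays at $\eps^\alpha$ otherwise. Combining the spatial and temporal estimates on a compact box $R_{L,\bar T}:=[0,\bar T]\times[-L,L]$ yields
\[
\|\mathcal Z^\eps_T(X)-\mathcal Z^\eps_{T'}(X')\|_{2n}\le C_L\bigl(|X-X'|^\alpha+|T-T'|^{\alpha/2}+\eps^\alpha\bigr),
\]
with $C_L$ absorbing the factor $e^{4a'L}$ coming from the exponential weights. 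For $n$ large enough that $2n\alpha$ exceeds the parabolic dimension of $R_{L,\bar T}$, the two-parameter Kolmogorov continuity theorem produces an $\eps$-uniform bound on $\E\|\mathcal Z^\eps\|_{C^\beta(R_{L,\bar T})}^{2n}$ for any parabolic Hölder exponent $\beta<\alpha$; the residual $\eps^\alpha$ merely shifts the bound by an $\eps$-independent constant and does not affect the argument.

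Tightness of $\mathcal Z^\eps|_{R_{L,\bar T}}$ in $C(R_{L,\bar T})$ follows for each $L,\bar T$, and a diagonal argument over $L,\bar T\to\infty$ lifts this to tightness of $\{\mathcal Z^\eps\}$ in $C([0,\infty)\times\R)$ with the compact-open topology; this space embeds continuously into $D([0,\infty),C(\R))$, so the laws are tight in $D$ as well and all subsequential limits concentrate on the continuous subspace, proving both claims of the proposition. The only subtle ingredient is the control of the microscopic temporal jumps of $Z_t$, and that is exactly the role of the $\eps^\alpha$ correction in the temporal estimate: it certifies that on time scales shorter than $\eps_j^2$ the oscillation of $\mathcal Z^\eps$ is $O(\eps^\alpha)\to 0$, so no discontinuity can persist in the $\eps\to 0$ limit.
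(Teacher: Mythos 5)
Your overall route---rescale the moment bounds of Proposition~\ref{prop:Holder} to macroscopic variables, run a two-parameter Kolmogorov--Chentsov argument on compact parabolic boxes, and exhaust $[0,\infty)\times\R$ by a diagonal argument---is exactly the argument the paper invokes by citing \cite[Proof of Proposition~1.4]{DemboTsai} and \cite[Proof of Theorem~3.3]{BG}, and your translation of the weights and exponents is correct. But the central step as you state it fails: for each fixed $\eps$ the process $\mathcal Z^\eps$ has genuine jumps in time (each Poisson clock ring multiplies $Z_t(x)$ by $q^{\pm2}$), so it admits no continuous modification and $\E\|\mathcal Z^\eps\|_{C^\beta(R_{L,\bar T})}^{2n}=\infty$ identically. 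The Kolmogorov continuity theorem therefore cannot be applied to $\mathcal Z^\eps$ itself, and the claim that the residual $\eps^\alpha$ ``merely shifts the bound by an $\eps$-independent constant'' is wrong: in the dyadic chaining underlying the theorem the increment bound is summed over all scales, and an additive constant at every scale destroys the conclusion---which is just as well, since the conclusion is false here. Your closing remark also overreads \eqref{e:Holdert}: it is a two-point moment bound, not a bound on the supremum of the oscillation over a short time interval, so by itself it does not control the jumps.

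The standard repair, which is what the cited proofs actually do, is to introduce the auxiliary process $\tilde{\mathcal Z}^\eps$ obtained by interpolating $Z_t(x)$ linearly in $t$ over the microscopic integer time grid. For grid times with $|t-t'|\geq 1$ one has $(1\vee|t-t'|^{\alpha/2})\,\eps^\alpha \leq C|T-T'|^{\alpha/2}$, so $\tilde{\mathcal Z}^\eps$ satisfies the clean bound $\|\tilde{\mathcal Z}^\eps_T(X)-\tilde{\mathcal Z}^\eps_{T'}(X')\|_{2n}\leq C_L\bigl(|X-X'|^\alpha+|T-T'|^{\alpha/2}\bigr)$ with no additive term, and Kolmogorov--Chentsov applies to it, yielding $C$-tightness on each box exactly as you describe. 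To transfer back to $\mathcal Z^\eps$ one needs a supremum (not two-point) bound on $|\mathcal Z^\eps-\tilde{\mathcal Z}^\eps|$ over compacts; this is supplied by \eqref{eq:Z:ss}, which dominates the oscillation of $Z_\Cdot(x)$ over a unit microscopic time interval by $(e^{2\sqrt\eps N(x)}-1)Z$ with $N(x)$ stochastically bounded by a Poisson variable of constant rate, giving high-moment bounds of order $\sqrt\eps\,e^{a\eps|x|}$; a union bound over the $O(\eps^{-3})$ space-time lattice points of a box then shows $\sup|\mathcal Z^\eps-\tilde{\mathcal Z}^\eps|\to0$ in probability. Since $\mathcal Z^\eps$ is thus a uniformly vanishing perturbation of a $C$-tight family, tightness in the Skorokhod space and concentration of limit points on continuous trajectories both follow; with this interpolation step inserted, your rescaling computation and diagonal argument complete the proof.
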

With this, in Section~\ref{sect:unique} we prove the following proposition,
which, together with the uniqueness of the \ac{SHE}, 
completes the proof of Theorem~\ref{thm:main}.
\begin{proposition}\label{prop:unique}
For near equilibrium initial conditions,
any limiting point $ \mathscr Z $ of $ \{\mathcal Z^\e \}_\e $ solves the \ac{SHE}.
\end{proposition}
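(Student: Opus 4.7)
The plan is to adapt the scheme of Bertini--Giacomin \cite{BG}. The cornerstone is a discrete G{\"a}rtner/Cole--Hopf identity: the choice of $\nu$ in \eqref{nu} is exactly what is needed so that, with the usual discrete Laplacian $\Delta f(x) := f(x+1)+f(x-1)-2f(x)$, one has
\[
(\mathcal{L} Z_t)(x) = \tfrac12 \Delta Z_t(x) + \mathcal{E}_t(x),
\]
where $\mathcal{E}_t(x) = O(\e)\, Z_t(x)\cdot F(\eta_t(x), \eta_t(x\pm 1))$ for a bounded local function $F$. To derive it, I substitute $Z = q^{-2h+\nu t}$ into \eqref{gen}: the factor $q^{-2(h(x\pm 1)-h(x))}$ becomes $q^{\mp 2 \eta(x+1)}$ and combines with the $q^{\eta(x)-\eta(x+1)\mp(2j+1)}$ in \eqref{e:rates}, exposing the values of $Z$ at $x$ and $x\pm 1$; the remaining $q$-number coefficients, after expanding $q=e^{-\sqrt\e}$, split into a symmetric part equal to $\tfrac12$ and an antisymmetric part of size $\sqrt\e$ that is absorbed by the compensator $\nu t$. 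What remains is $\tfrac12\Delta$ plus an error $\mathcal{E}$ of size $O(\e)$; this is the analogue of the classical ASEP identity of \cite{BG}, with $[n]_q$ in place of $n$.

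Dynkin's formula then yields the semimartingale decomposition
\[
Z_t(x) = Z_0(x) + \int_0^t \tfrac12 \Delta Z_s(x)\, ds + \int_0^t \mathcal{E}_s(x)\, ds + M_t(x),
\]
with $M_t(x)$ a purely discontinuous martingale. For $\phi\in C_c^\infty(\R)$ I consider the pairing $\mathcal{Z}^\e_T(\phi) := \int_\R \mathcal{Z}^\e_T(X)\phi(X)\, dX$ and rescale as in \eqref{e:calZ}. Under the scaling $T=\eps_j^2 t$, $X=\eps_j x$, the Laplacian term becomes $\int_0^T \mathcal{Z}^\e_S(\tfrac12\Delta^\e \phi)\,dS$ with $\Delta^\e\phi\to\Delta\phi$ uniformly on compact sets, while the moment bounds of Proposition~\ref{prop:Holder} force $\int_0^T \int_\R \mathcal{E}^\e_S(X)\phi(X)\,dX\,dS\to 0$ in $L^2$. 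A direct computation from \eqref{gen} gives
\[
\tfrac{d}{dt}\langle M(x)\rangle_t = b^\e(\eta_t,x)\, Z_t(x)^2, \qquad \tfrac{d}{dt}\langle M(x), M(y)\rangle_t = 0\ \text{ for }\ |x-y|\ge 2,
\]
with $b^\e\to 1$ as $\e\to 0$; the slowly varying coefficient $b^\e$ and the nearest-neighbour off-diagonal terms are handled by the same moment control and a standard replacement argument. Summing against $\phi^2$ and rescaling, one then obtains $\langle \mathcal{M}^\e(\phi)\rangle_T \to \int_0^T\!\!\int_\R \mathcal{Z}^\e_S(X)^2\phi(X)^2\,dX\,dS$.

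With these ingredients, Proposition~\ref{prop:tight} produces a subsequential limit $\mathcal{Z}^\e\Rightarrow\mathscr{Z}$ that is continuous in space and time, and the uniform bounds of Proposition~\ref{prop:Holder} let me pass both the semimartingale decomposition and the bracket to the limit. The result is that for every $\phi$,
\[
\mathscr{Z}_T(\phi) = \mathscr{Z}_0(\phi) + \int_0^T \mathscr{Z}_S(\tfrac12\Delta\phi)\,dS + \mathscr{M}_T(\phi),
\]
with $\mathscr{M}_T(\phi)$ a continuous martingale of bracket $\langle\mathscr{M}(\phi)\rangle_T = \int_0^T\!\!\int_\R \mathscr{Z}_S(X)^2\phi(X)^2\,dX\,dS$. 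Mitoma's theorem together with a standard martingale-representation step identifies $\mathscr{M}_T(\phi) = \int_0^T\!\!\int_\R \mathscr{Z}_S(X)\phi(X)\,dW_S(X)$, whence $\mathscr{Z}$ is a mild solution of \eqref{e:SHE-mild}. The main obstacle is the G{\"a}rtner identity of Step~1: because the rates in \eqref{e:rates} are not simple indicators but products of $q$-numbers in $\eta(x)$ and $\eta(x+1)$, the $\sqrt\e$-expansion is combinatorially heavier than in \cite{BG}, and one must carefully track the cancellations that shift the antisymmetric part of the generator into the compensator $\nu t$ while leaving only an $O(\e)$ remainder. A secondary, more standard difficulty is replacing $b^\e(\eta_t,x)$ by its limiting value inside the quadratic variation, which is handled by the moment bounds \eqref{e:uniform}--\eqref{e:Holdert} together with the self-duality structure pointed to in the introduction.
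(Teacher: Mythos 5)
Your outline follows the right general scheme (the \cite{BG} martingale problem), but it contains two genuine gaps, and they sit exactly at the two places where this paper has to do real work. First, the drift. You assert the G{\"a}rtner identity only up to a remainder $\mathcal{E}_t(x) = O(\eps)\,Z_t(x)\,F(\eta_t)$ and claim the moment bounds of Proposition~\ref{prop:Holder} force its contribution to vanish in $L^2$. Both halves are wrong. The whole point of the choice of $\nu$ in \eqref{nu} is that the identity is \emph{exact}: Proposition~\ref{prop:HC}\ref{enu:HCdr} shows by an exact $q$-number computation (no $\sqrt\eps$-expansion at all) that $\Omega Z_t(x) = \tfrac12 \Delta Z_t(x)$ identically in $q$, so $dZ_t(x) = \tfrac12\Delta Z_t(x)\,dt + dM_t(x)$ with \emph{no} remainder. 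This exactness is not cosmetic: a drift error of size $O(\eps) Z$, integrated over microscopic times $t \le \eps_j^{-2}T$ and paired with $\varphi$, contributes $O(\eps)\cdot \eps^{-2} = O(\eps^{-1})$ to $N^\eps_T(\varphi)$ — it \emph{diverges} rather than vanishes, and no uniform moment bound can rescue it unless the error were $o(\eps^2)Z$ or had fluctuating structure you do not exhibit. Your step ``$\int_0^T\!\int \mathcal{E}^\eps_S \phi \to 0$ in $L^2$'' therefore fails as stated; the correct statement is that $\mathcal{E}\equiv 0$.

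Second, the quadratic variation, which you dismiss as a ``secondary, more standard difficulty'' handled by replacing $b^\eps$ via moment bounds and self-duality. This is in fact the central difficulty, and your proposed method demonstrably fails. By \eqref{e:ASEPbrac}, the bracket is $\mathbf{1}_{\{x=y\}}\big(\tfrac{4\eps j^2}{[2j]_q}Z_t(x)^2 - \tfrac{1}{[2j]_q}\nabla^+Z_t(x)\nabla^-Z_t(x) + o(\eps)Z_t(x)^2\big)$; since $\nabla^\pm Z \sim 2\eta\sqrt\eps\, Z$, the cross term is of the \emph{same} order $\eps Z^2$ as the main term, with the genuinely fluctuating coefficient $\eta(x)\eta(x+1)$ (cf.\ \eqref{e:another-brac}), so the bracket cannot be written as $b^\eps Z^2$ with $b^\eps$ converging to a constant. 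Bounding the corresponding error $R_2^\eps(\varphi)$ by the moment estimates alone gives only $\E(R_2^\eps(\varphi)^2) = O(1)$ — the paper points this out explicitly — which is insufficient. What is actually needed is the ``key estimate'' Lemma~\ref{lem:key-est}: the conditional expectation $U^\eps(x,t,s) = \E(\nabla^-Z_t(x)\nabla^+Z_t(x)\,|\,\mathcal F_s)$ gains an extra factor $\eps^{1/2-\delta}$ over the naive bound, proved by a decomposition into $p_t * Z_0$ and stochastic-convolution parts, an iteration in the style of \cite[Lemma~4.8]{BG}, and — crucially — the heat-kernel cancellation $\sum_x\int_0^\infty \nabla^+p_t(x)\nabla^-p_t(x)\,dt = 0$ of \eqref{e:identity0} (generalized in Lemma~\ref{lem:identity}). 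Nothing playing this role appears in your outline, and self-duality enters nowhere in this step (it only motivated the transform \eqref{e:Z-defn}). A minor further inaccuracy: the off-diagonal brackets vanish for \emph{all} $x\neq y$, not merely $|x-y|\ge 2$, because $Z(x)$ jumps only with the Poisson clock at the bond $(x,x+1)$, so there are no nearest-neighbour terms to handle.
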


Turning to Theorem~\ref{thm:step},
with $ \mathcal Z^{\eps,*} $ as in Theorem~\ref{thm:step},
we have that
\[
	\lim_{\e \to 0} \e \sum_{x\in\Z} Z^{\eps,*}_0(x) \to 1.
\]
Combining this with the exponential decay (in $ |x| $) of $ Z^{\eps,*}_0(x) $,
one easily obtains $ Z^{\eps,*}_0(\Cdot) \Rightarrow \delta(\Cdot) $.
With this and Theorem~\ref{thm:main},
following the argument of \cite[Section 3]{MR2796514}
Theorem~\ref{thm:step} is an immediate consequence of
the following moment estimates,  which we establish in Section~\ref{sect:tight}.
\begin{proposition}\label{prop:stepEst}
Let $ Z^*_t(x) = \frac{1}{2\sqrt{\eps}} Z_t(x) $.
For the step initial condition,
for any $ T<\infty $, $ n\geq 1 $ and $ \alpha\in(0,1/2) $,
there exists $ C $ such that
\begin{align}
	&
	\label{eq:momd}
	\Vert {Z}^{*}_t(x) \Vert_{2n} \leq C / \sqrt{\eps^2 t },
\\
	&
	\label{eq:momdx}
	\Vert {Z}^{*}_t(x) - {Z}^{*}_t(x) \Vert_{2n}
	\leq C
	(\e|x-x'|)^{\alpha} (\eps^2 t)^{-(1+\alpha)/2},
\end{align}
for all $ t\in(0,\eps_j^{-2}T] $ and $ x,x'\in\R $.
\end{proposition}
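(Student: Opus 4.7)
The argument parallels the proof of Proposition~\ref{prop:Holder}, using the microscopic mild formulation $Z_t(x) = \Phi(t,x) + N_t(x)$, where $\Phi(t,x) = \sum_y p_t(x,y) Z_0(y)$ with $p_t$ the semigroup of the linearized microscopic G{\"a}rtner transform, and $N_t(x) = \int_0^t \sum_y p_{t-s}(x,y) \, dM_s(y)$ is a stochastic integral against a compensated jump martingale $M$ whose predictable bracket satisfies $d\langle M(y)\rangle_s \lesssim \eps \, Z_s(y)^2 \, ds$. Writing $\Phi^* := \Phi/(2\sqrt{\eps})$ and $N^* := N/(2\sqrt{\eps})$, one has $Z^*_t(x) = \Phi^*(t,x) + N^*_t(x)$. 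The only difference from the near-equilibrium case of Proposition~\ref{prop:Holder} is that now $Z^*_0(y) = \frac{1}{2\sqrt{\eps}}\, e^{-2j\sqrt{\eps}|y|}$ is deterministic but concentrated, which is the source of the $t\to 0$ singularity in \eqref{eq:momd}.

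First I would bound the initial-data contribution using standard Gaussian estimates for the random-walk heat kernel:
\[
\Phi^*(t,x) \le \frac{C}{\sqrt{\eps^2 t}}, \qquad |\Phi^*(t,x) - \Phi^*(t,x')| \le \frac{C(\eps|x-x'|)^\alpha}{(\eps^2 t)^{(1+\alpha)/2}},
\]
uniformly for $t \in (0,\eps_j^{-2}T]$. The intuition is that the profile $e^{-2j\sqrt{\eps}|y|}$ has spatial scale $\eps^{-1/2}$, much smaller than the heat-kernel spread $\sqrt{t}\asymp \eps^{-1}$, so $Z_0^*$ acts like a delta of total mass $O(1/\eps)$ as seen by the heat kernel; this yields $\Phi^* \asymp G_t(x)/\eps \asymp 1/(\eps\sqrt{t})$. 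The H{\"o}lder bound uses the standard heat-kernel regularity estimate $|p_t(x,y)-p_t(x',y)| \lesssim (|x-x'|/\sqrt{t})^\alpha (p_t(x,y)+p_t(x',y))$.

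Next I would apply BDG together with Minkowski to the stochastic integral to obtain
\[
\|N^*_t(x)\|_{2n}^2 \le C\eps \int_0^t \sum_y p_{t-s}(x,y)^2 \|Z^*_s(y)\|_{2n}^2 \, ds,
\]
with an analogous estimate for $N^*_t(x)-N^*_t(x')$. The bootstrap is then a Picard iteration. Its engine is the discrete analogue of the Gaussian identity $\int_0^T \!\int \mathscr{P}_{T-S}(X-Y)^2 \mathscr{P}_S(Y)^2 \, dY \, dS = \mathscr{P}_T(X)^2 \cdot \sqrt{\pi T}/2$, which in our scaling reads
\[
\eps \int_0^t \sum_y p_{t-s}(x,y)^2 \Phi^*(s,y)^2 \, ds \le C\sqrt{T}\, \Phi^*(t,x)^2
\]
for $t \le \eps_j^{-2}T$. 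Iterating this inequality together with $\|Z^*_s(y)\|_{2n}^2 \le 2\Phi^*(s,y)^2 + 2\|N^*_s(y)\|_{2n}^2$ and summing the Picard series yields $\|Z^*_t(x)\|_{2n}^2 \le C_T\,\Phi^*(t,x)^2$, whence \eqref{eq:momd}; the estimate \eqref{eq:momdx} follows from the same iteration run with the differenced kernel $p_{t-s}(x,y)-p_{t-s}(x',y)$.

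The main obstacle is executing this Picard bootstrap cleanly at the discrete level. The continuum convolution identity above rests on a Gaussian completion of squares, while its discrete counterpart must combine (i) heat-kernel asymptotics at macroscopic times $s \gg \eps^{-1}$ with (ii) careful handling of the short-time layer $s \lesssim \eps^{-1}$, where $\Phi^*(s,\cdot)$ still resembles the raw exponential profile rather than its Gaussian envelope. Verifying that this initial layer does not produce spurious logarithmic losses in $\eps$, and that the resulting Picard constants sum uniformly in $\eps$ on the scale $t \le \eps_j^{-2}T$, is the technical heart of the argument.
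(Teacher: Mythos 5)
Your overall architecture --- mild formulation, BDG bound on the stochastic convolution, Picard iteration off the free solution, and the observation that the step data $Z_0(x)=e^{-2j\sqrt{\e}|x|}$ rescales to a delta of mass $\sim\e_j^{-1}$ --- matches the paper's proof, and your bounds on the deterministic term $\Phi^*$ are correct. The genuine gap is the ``engine'' you propose for closing the bootstrap: the self-reproducing squared-kernel inequality $\e\int_0^t\sum_y p_{t-s}(x-y)^2\,\Phi^*(s,y)^2\,ds\le C\sqrt{T}\,\Phi^*(t,x)^2$. You flag it as unverified, but it is worse than a routine verification: with the tools you invoke (the sup bound $p_{t-s}^2\le C(t-s)^{-1/2}p_{t-s}$ and standard kernel asymptotics) the inequality \emph{fails} uniformly in $x$. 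After one use of the sup bound, the left side retains the single-kernel Gaussian tail $e^{-x^2/2t}$, whereas $\Phi^*(t,x)^2$ decays at the doubled rate $e^{-x^2/t}$ in the regime $\sqrt{t}\lesssim|x|\lesssim\sqrt{\e}\,t$; in macroscopic variables the ratio behaves like $\sqrt{\e}\,T^{-1/2}e^{X^2/2T}$, which blows up once $|X|\gtrsim\sqrt{T\log(1/\e)}$. The continuum identity you cite is saved precisely by the exact Gaussian square-completion $\mathscr P_{T-S}^2*\mathscr P_S^2\propto(S(T-S))^{-1/2}\mathscr P_T^2$, in which \emph{both} factors keep their squared tails; to reproduce this discretely you would need tail-matched bounds of the form $p_t(z)^2\le Ct^{-1/2}\tilde p_{ct}(z)$ with $\tilde p$ Gaussian of the halved variance, on top of the initial-layer analysis at $s\lesssim\e^{-1}$ that you mention. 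Since \eqref{eq:momd}--\eqref{eq:momdx} are asserted uniformly in $x\in\R$, the tail regime cannot be discarded, so the bootstrap as designed does not close.

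The paper dissolves this difficulty with one algebraic move your plan is missing: instead of iterating against $\Phi^{*2}$, it bounds only \emph{one} factor of the square by the scalar sup, giving $(I_1^*(t,x))^2\le\frac{C}{\sqrt{\e^2t}}\,(p_t*e^{-\sqrt{\e}|\Cdot|})(x)$ as in \eqref{eq:I1*}--\eqref{eq:I1*:}, so that every Picard iterate involves only single, unsquared heat kernels, which collapse via the semigroup property $p_{t-s}*p_s=p_t$; the singular prefactor is carried as a scalar, and the time integrals produce the Beta-function factors $I_j(\e^2t)=T^{(j-1)/2}\Gamma(1/2)^{j+1}/\Gamma((j+1)/2)$, whose factorial decay makes the series sum uniformly for $\e^2t\le T$, yielding \eqref{eq:Z*bd} and hence \eqref{eq:momd}. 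The spatial-H\"older bound \eqref{eq:momdx} then follows, much as you anticipated, by rerunning the estimate with the differenced kernel and $|p_t(x+y)-p_t(y)|\le Ct^{-1/2-\alpha}|y|^{2\alpha}$, together with \eqref{eq:Z*bd}. In short: your treatment of the initial term and of the H\"older part is sound, but the central convolution inequality must be replaced by the paper's factorized iteration (or else proved with genuinely two-sided Gaussian kernel bounds) before the argument is complete.
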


\subsection{Outline}

The rest of the paper is organized as follows. In Section~\ref{sect:SHE}
we show the crucial result that for the ASEP($q,j$) model,
one can still achieve the discrete Hopf-Cole / G{\"a}rtner transform.
In Section~\ref{sect:tight} we prove tightness of the rescaled processes 
as in the ASEP case in \cite{BG}, but we use some of the more recent treatments
in \cite{DemboTsai} which simplified the arguments of \cite{BG}.
In Section~\ref{sect:unique} we identify the limit as the solution of SHE; 
which essentially follows  the arguments of \cite{BG} but in the ``key estimate" we provided a proof to the more general case of a crucial cancellation and since  \cite{BG} was written twenty years ago, we make the proofs slightly more streamlined in our presentation.

\subsection{Acknowledgements}

IC was partially supported by the NSF DMS-1208998, 
by a Clay Research Fellowship, and by a Packard Fellowship for Science and Engineering. 
LCT was was partially supported by the NSF DMS-1106627 and the KITP graduate fellowship.
Some of this work was done during the Kavli Institute for Theoretical Physic program 
``New approaches to non-equilibrium and random systems: KPZ integrability, universality, applications and experiments'' and was supported in part by the NSF through NSF PHY11-25915.

\section{Microscopic \ac{SHE}}
\label{sect:SHE}


In this section we derive the microscopic Hopf-Cole / G{\"a}rtner transform of ASEP($ q,j $),
stated in the  following proposition. This discrete level Hopf-Cole transformation was introduced by G{\"a}rtner \cite{MR931030}, see also \cite{dittrich91} by Dittrich and G{\"a}rtner.
\begin{proposition}\label{prop:HC}
\begin{enumerate}[label=(\alph*)]
	\item[]
	\item\label{enu:HCdr}
	For  $ Z_t(x) $ is defined as in \eqref{e:Z-defn},
	we have that
	\begin{equ}\label{duality}
		d Z_t(x) = \tfrac12 \Delta Z_t(x)\,dt + dM_t(x)
	\end{equ}
	where $\Delta f(x) = f(x+1) + f(x-1) - 2f(x) $ denotes the standard discrete Laplacian
	and $ M_\Cdot(x) $, $ x\in\Z $, are martingales;
	\item\label{enu:HCMG}
	Furthermore, for the martingale term, we have
	\begin{equ} \label{e:ASEPbrac}
	\frac{d}{dt}  \langle M(x),M(y) \rangle_t
	 = \mathbf{1}_{\{x=y\}} \Big(\tfrac{4\eps j^2}{[2j]_q} Z_t(x)^2
		 -\tfrac{1}{[2j]_q}\nabla^+ Z_t(x) \nabla^- Z_t(x) +o(\eps)Z_t(x)^2 \Big)
	\end{equ}
	where $o(\eps)$ is a term uniformly bounded by constant $C_\eps$ and
	$C_\eps / \eps \to 0$.	
\end{enumerate}
\end{proposition}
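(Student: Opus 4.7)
My plan for (a) is to apply the It\^o / jump formula to $Z_t(x) = q^{-2 h_t(x) + \nu t}$. The key structural observation is that a single particle jump at bond $(y, y+1)$ alters $h_t(\cdot)$ only at site $y$: $h_t(y) \mapsto h_t(y) - 1$ for a right-jump and $h_t(y) \mapsto h_t(y) + 1$ for a left-jump. Consequently $Z_t(x)$ changes only when bond $(x, x+1)$ fires, with respective jump sizes $(q^2 - 1) Z_t(x)$ and $(q^{-2} - 1) Z_t(x)$. Combining the generator action with the continuous drift $\nu \ln q \cdot Z_t(x)$ from the exponential factor, it suffices to verify
\[
  c_q^+(\eta, x)(q^2 - 1) + c_q^-(\eta, x)(q^{-2} - 1) + \nu \ln q \;=\; \tfrac12\bigl[q^{-2 \eta(x+1)} + q^{2\eta(x)} - 2\bigr],
\]
the right-hand side being $\tfrac12 \Delta Z_t(x) / Z_t(x)$. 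Substituting the rates \eqref{e:rates} and expanding each product $(q^{j \pm A} - q^{-j \mp A})(q^{j \mp B} - q^{-j \pm B})$ with $A = \eta(x)$, $B = \eta(x+1)$, the cross-terms telescope via the identity $[n]_q(q-q^{-1}) = q^n - q^{-n}$, yielding exactly $\tfrac12(q^{2A} + q^{-2B}) - \tfrac{[4j]_q}{2[2j]_q}$ on the left; the choice of $\nu$ in \eqref{nu} then matches the right-hand side.

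For (b), because $M_t(x)$ is a pure-jump martingale whose jumps all come from bond $(x, x+1)$ activity, no two martingales $M(x)$, $M(y)$ with $x \neq y$ share a common jump in continuous time, so $\langle M(x), M(y)\rangle_t \equiv 0$. For $x = y$ the predictable quadratic variation has rate $Z_t(x)^2 \bigl[ c_q^+(q^2-1)^2 + c_q^-(q^{-2}-1)^2 \bigr]$. I would then apply the polynomial identity
\[
  c_+ a^2 + c_- b^2 \;=\; (a+b)(c_+ a + c_- b) - a b (c_+ + c_-)
\]
with $a = q^2 - 1$, $b = q^{-2} - 1$, using that $a + b = (q - q^{-1})^2$ and $-ab = (q - q^{-1})^2$. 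Combined with the expression for $c_+ a + c_- b$ computed in part (a), this yields
\[
  c_q^+(q^2-1)^2 + c_q^-(q^{-2}-1)^2 \;=\; (q - q^{-1})^2 \Bigl[ \tfrac12(q^{2A} + q^{-2B}) - \tfrac{[4j]_q}{2[2j]_q} + c_q^+(\eta, x) + c_q^-(\eta, x) \Bigr].
\]

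The remaining task is a Taylor expansion in $\sqrt\eps$ around $q = 1$. Using $(q - q^{-1})^2 = 4\eps + O(\eps^2)$, $\tfrac{[4j]_q}{2[2j]_q} = \cosh(2j\sqrt\eps) = 1 + 2j^2 \eps + O(\eps^2)$, and the expansions $q^{\pm 2\eta} = 1 \mp 2\eta \sqrt\eps + 2\eta^2 \eps + O(\eps^{3/2})$, the $\sqrt\eps$-order pieces inside the bracket cancel (as they must, since (a) already produces a true martingale), and at order $\eps$ one extracts the constant $\tfrac{4\eps j^2}{[2j]_q}$ together with a contribution proportional to $\eta(x)\eta(x+1)$. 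This residual is then identified, through $\nabla^+ Z_t(x)/Z_t(x) = q^{-2\eta(x+1)} - 1$ and $\nabla^- Z_t(x)/Z_t(x) = q^{2\eta(x)} - 1$, with a multiple of $\nabla^+ Z_t(x) \nabla^- Z_t(x)$ modulo an $o(\eps) Z_t(x)^2$ error, yielding \eqref{e:ASEPbrac}. The main obstacle is the precise sign- and coefficient-bookkeeping in this expansion: contributions from $q + q^{-1}$, from the prefactors $q^{\pm(2j+1)}$ inside $c_q^\pm$, and from the four-term expansions of $[j \pm A]_q [j \mp B]_q$ all enter at order $\eps$, and must be shown to reassemble into the announced form with the $\sqrt\eps$-order pieces cancelling correctly.
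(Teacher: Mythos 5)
Your part (a) coincides with the paper's proof: the same Poisson-jump decomposition with jump factors $(q^{\pm2}-1)Z_t(x)$ attached to bond $(x,x+1)$, the same drift $\nu\ln q$, and the same $q$-number telescoping verifying $\Omega Z_t(x)=\tfrac12\Delta Z_t(x)$ with $\nu$ chosen as in \eqref{nu}. In part (b) your overall strategy (exact algebra in $q$, then Taylor expansion in $\sqrt\eps$, with uniformity coming from $|\eta|\le j$) is also the paper's, but your execution is genuinely different and slicker: the paper substitutes the rates \eqref{e:rates} into $(q^2-1)^2c^++(q^{-2}-1)^2c^-$ and redoes the product expansion from scratch, whereas your identity $c_+a^2+c_-b^2=(a+b)(c_+a+c_-b)-ab(c_++c_-)$ with $a+b=-ab=(q-q^{-1})^2$ recycles the drift computation of (a). This buys a short derivation of \eqref{e:another-brac}: at $q=1$ the first two terms of your bracket cancel (indeed $\tfrac12(q^{2A}+q^{-2B})\to1$ and $\tfrac{[4j]_q}{2[2j]_q}=\cosh(2j\sqrt\eps)\to1$), so the leading coefficient is simply $(c^++c^-)\big|_{q=1}=\tfrac{1}{2j}\big(j^2-\eta(x)\eta(x+1)\big)$, and the prefactor $(q-q^{-1})^2=4\eps+O(\eps^2)$ yields $\tfrac{4\eps}{[2j]_q}\big(j^2-\eta(x)\eta(x+1)\big)Z_t(x)^2$ up to $o(\eps)Z_t(x)^2$. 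Your justification that the cross-bracket vanishes for $x\ne y$ (no common jumps of the independent driving Poisson clocks) is also more accurate than the paper's appeal to "independence of $M(x)$ and $M(y)$", which is literally false since both integrands depend on the full configuration.

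Two corrections are needed. First, your claim that "the $\sqrt\eps$-order pieces inside the bracket cancel (as they must, since (a) already produces a true martingale)" is wrong on both counts: part (a) constrains the drift of $Z$, not its quadratic variation, and the cancellation in fact fails. For $j=\tfrac12$, $\eta(x)=\tfrac12$, $\eta(x+1)=-\tfrac12$ one has $c^+=\tfrac12 q^{-1}$, $c^-=0$, and your bracket equals exactly $\tfrac{q}{2}=\tfrac12-\tfrac12\sqrt\eps+O(\eps)$, so its $\sqrt\eps$-coefficient is $-\tfrac12\ne0$. Fortunately no cancellation is needed: after multiplication by $(q-q^{-1})^2=4\eps+O(\eps^2)$, any $\sqrt\eps$-order remainder contributes $O(\eps^{3/2})Z_t(x)^2=o(\eps)Z_t(x)^2$, uniformly because $|\eta|\le j$, and is absorbed into the stated error; with that replacement your argument closes. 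Second, the sign issue you flagged is real: your $\nabla^-Z_t(x)=(q^{2\eta(x)}-1)Z_t(x)$ is the formula faithful to the definition $\nabla^-f(x)=f(x-1)-f(x)$ and expands to $(-2\eta(x)\sqrt\eps+o(\sqrt\eps))Z_t(x)$, whereas the paper's proof uses $\nabla^-Z_t(x)=(1-q^{2\eta(x)})Z_t(x)$; with your (definition-consistent) convention the cross term comes out as $+\tfrac{1}{[2j]_q}\nabla^+Z_t(x)\nabla^-Z_t(x)$, so to land on \eqref{e:ASEPbrac} verbatim you must adopt the paper's sign convention for $\nabla^-$ (the discrepancy is harmless downstream, where only absolute bounds on this term enter).
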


To simplify notations, throughout
this section we \emph{omit} the dependence of parameters (e.g.\ $ q, \nu $) on $ \eps $.
To prove Proposition~\ref{prop:HC},
we note that each jump from $ x $ to $ x+1 $ (resp.\ from $ x+1 $ to $ x $)
decreases (resp.\ increases) $ h(x) $ by $ 1 $.
Taking into account the factor $ q^{\nu t} $ in \eqref{e:Z-defn},
we obtain from \eqref{genZ} that
%
\begin{align}
	\notag
	dZ_t(x)
	=
	(q^{2}-1) Z_t(x) c^+(\eta, x ) dP_t^+ (x)
	&+ (q^{-2}-1) Z_t(x) c^-(\eta, x) dP_t^-(x)
\\
	\label{eq:HC:}
	&+ Z_t(x) \nu \ln q \,dt
\end{align}
where $\{P_t^+(x)\}_{x\in\Z}$ and $\{P_t^-(x)\}_{x\in\Z}$
are independent Poisson processes with unit rate.
Letting $ M_t^\pm(x) := \int_0^t ( c^\pm(\eta(s), x) dP_s^\pm(x) - c^\pm(\eta(s), x) ds)$
denote the corresponding compensated Poisson processes,
which is a martingale,
we have that
\begin{equ}
	dZ_t(x) =  \Omega Z_t(x)  \,dt
			+ dM_t(x)
\end{equ}
where the drift term has coefficient
\begin{equ} \label{e:Omega}
\Omega= (q^{2}-1)  c^+ (\eta,x)
			+(q^{-2}-1) c^-(\eta,x) +\nu \ln q
\end{equ}
and the martingales $\{M_t(x)\}_{x\in \Z}$  are defined as
\begin{equ} \label{e:M}
	M_t(x) =
	\int_0^t \big(
		(q^{2}-1) Z_s(x) dM_s^+(x)
		+ (q^{-2}-1) Z_s(x) dM_s^-(x) \big) \;.
\end{equ}

\begin{proof}[of Proposition~\ref{prop:HC}\ref{enu:HCdr}]

With \eqref{eq:HC:},
proving \eqref{duality} amounts
to proving $ \Omega Z_t(x) = \frac{1}{2} \Delta Z_t(x) $.
First of all, by the definition \eqref{e:Z-defn} of $Z_t$,
we clearly have (omitting the subscript $t$ for simplicity):
\begin{equ}\label{Lap:Z}
	\Delta Z(x) = \Big(q^{-2 \eta (x+1)}+q^{2 \eta (x)} -2 \Big) Z(x) \;.
\end{equ}
On the other hand, by straightforward computation
using the definition \eqref{e:Omega} of $\Omega$
and the expression \eqref{e:rates} of the rates $c^\pm$,
\begin{equs}
2[2j]_q(\Omega &-\nu \ln q )
=  (q^{2}-1)  \,
	q^{\eta(x)-\eta(x+1)-(2j+1)} \, [j+\eta(x)]_q [j-\eta(x+1)]_q \\
 &\qquad\qquad  +(q^{-2}-1) \,
 	q^{\eta(x)-\eta(x+1)+(2j+1)}\, [j -\eta(x)]_q [j+\eta(x+1)]_q \\
&=  (q^{2}-1)  \,q^{\eta(x)-\eta(x+1)-(2j+1)}
	\,\frac{q^{j+\eta(x)} -q^{-(j+\eta(x))}}{q-q^{-1}}
	 \,\frac{q^{j-\eta(x+1)} -q^{-(j-\eta(x+1))}}{q-q^{-1}} \\
 &\quad +(q^{-2}-1) \, q^{\eta(x)-\eta(x+1)+(2j+1)}
 	\,\frac{q^{j-\eta(x)} -q^{-(j-\eta(x))}}{q-q^{-1}}
	 \,\frac{q^{j+\eta(x+1)} -q^{-(j+\eta(x+1))}}{q-q^{-1}} \\
&=  \frac{1}{q-q^{-1}}
	\Big( \big(q^{2\eta(x)}-q^{-2j} \big)\,
		\big(q^{-2\eta(x+1)}-q^{-2j} \big)
	 -\big(q^{2j}-q^{2\eta(x)} \big) \, \big(q^{2j} - q^{-2\eta(x+1)} \big) \Big) \\
&=  \frac{1}{q-q^{-1}} \Big( q^{-4j} -q^{4j} +q^{2\eta(x)+2j} -q^{2\eta(x)-2j}
		+q^{2j-2\eta(x+1)} -q^{-2j-2\eta(x+1)}\Big) \\
&=[2j]_q \Big(q^{2\eta(x)} +  q^{-2\eta(x+1)}-2\Big) -[4j]_q+2[2j]_q
\end{equs}
Comparing this with \eqref{Lap:Z} one obtains
\begin{align*}
	\Omega Z_t(x) = \tfrac12 \Delta Z_t(x)
	+ \Big(\nu\ln q+1 -[4j]_q/(2[2j]_q)\Big) Z_t(x).
\end{align*}
With this and \eqref{nu}, the desired result
$ \Omega Z_t(x) = \frac{1}{2} \Delta Z_t(x) $ follows.
\end{proof}
%
%
%
%

%
%

\begin{proof}[of Proposition~\ref{prop:HC}\ref{enu:HCMG}]
By the definition \eqref{e:M}, the bracket process of $M_t$ is
\begin{equ}
\frac{d}{dt}\langle M(x),M(y) \rangle_t
	=  \mathbf{1}_{\{x=y\}} \Big((q^{2}-1)^2  c^+(\eta_t,x)
			+(q^{-2}-1)^2  c^-(\eta_t, x) \Big) Z_t(x)^2
\end{equ}
For ASEP(q,j),
substituting $c^\pm $ and following similar computations as above,
and by independence of $M(x)$ and $M(y)$ for $x\neq y$, one has
\begin{equs}
\frac{d}{dt}&\langle M(x),M(y) \rangle _t
=\frac{\mathbf{1}_{\{x=y\}}}{2[2j]_q} Z_t(x)^2
\\
&
	\times \Big(  \frac{q^2 -1}{q-q^{-1}}
		\big(q^{2\eta(x)}-q^{-2j} \big)\,\big(q^{-2\eta(x+1)}-q^{-2j} \big)
	   -\frac{q^{-2} -1}{q-q^{-1}}
	   	\big(q^{2j}-q^{2\eta(x)} \big)\,
	   	\big(q^{2j} - q^{-2\eta(x+1)} \big) \Big)
\end{equs}
With $q=e^{-\sqrt{\eps}}$, $q^a= 1-a\sqrt\eps +o(\sqrt\eps)$,
for any uniformly bounded variable $a$,
we further obtain
\begin{equs}
\frac{d}{dt} & \langle M(x),M(y) \rangle_t \\
& = - \frac{2\eps\mathbf{1}_{\{x=y\}}}{[2j]_q} Z_t(x)^2
	\Big(  \big(\eta(x) +j\big)\,\big(\eta(x+1)-j\big)
		+ \big(\eta(x)-j \big) \,\big(\eta(x+1)+j \big) +o(1)\Big) \\
& =  \frac{4\eps \mathbf{1}_{\{x=y\}}}{[2j]_q} Z_t(x)^2
	\Big(  j^2-\eta(x) \eta(x+1)+o(1)\Big) \label{e:another-brac} \;.
\end{equs}
On the other hand,
\begin{equs}
\nabla^+ Z_t(x) &= (q^{-2\eta(x+1)}-1)Z_t(x)
=\big(2\eta(x+1)\sqrt\eps +o(\sqrt\eps )\big)Z_t(x)  \\
\nabla^- Z_t(x) &= (1-q^{2\eta(x)})Z_t(x)
=\big(2\eta(x)\sqrt\eps +o(\sqrt\eps )\big)Z_t(x),
\end{equs}
from which the desired result \eqref{e:ASEPbrac} follows.
\end{proof}

A useful bound on $ \frac{d}{dt}\langle M(x),M(x) \rangle_t $ is the following
\begin{corollary}
For $ M_t(x) $ as in Proposition~\ref{prop:HC}, we have that
\begin{equ}\label{eq:QVbd}
	|\tfrac{d}{dt} \langle M(x),M(y) \rangle_t|
	\le \mathbf{1}_{\{x=y\}} C\eps Z_t(x)^2  \;
\end{equ}
for some finite constant $ C $.
\end{corollary}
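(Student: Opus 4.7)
The plan is to read off the bound from the intermediate computation already performed in the proof of Proposition~\ref{prop:HC}\ref{enu:HCMG}, rather than starting from the final form in \eqref{e:ASEPbrac} (where one would have to re-bound the gradient term $\nabla^+ Z_t(x)\nabla^- Z_t(x)$). The key observation is that the displayed identity
\[
\tfrac{d}{dt} \langle M(x),M(y) \rangle_t
=  \frac{4\eps \mathbf{1}_{\{x=y\}}}{[2j]_q} Z_t(x)^2
\Big(  j^2-\eta(x) \eta(x+1)+o(1)\Big)
\]
comes with a factor $(j^2 - \eta(x)\eta(x+1))$ that is \emph{deterministically bounded}, because $\eta(x),\eta(x+1) \in \{-j,\ldots,j\}$ implies $|\eta(x)\eta(x+1)| \le j^2$ and hence $|j^2 - \eta(x)\eta(x+1)| \le 2j^2$. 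The $o(1)$ term is likewise uniformly bounded in $\eps$ and $\eta$ (it comes from a Taylor expansion of $q^a$ with $|a|\le 4j$).

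Therefore my proof would be one display: start from the intermediate identity above, take absolute values, apply the deterministic bound $|j^2 - \eta(x)\eta(x+1)| + |o(1)| \le 2j^2 + 1$ (valid for all $\eps$ sufficiently small), and absorb the factor $\tfrac{4}{[2j]_q}(2j^2+1)$ together with the constants from the $o(1)$ term into a single constant $C = C(j)$ independent of $\eps$. This yields precisely \eqref{eq:QVbd}. Using $[2j]_q \to 2j$ as $\eps \to 0$ (equivalently $q \to 1$) ensures that $[2j]_q$ is bounded away from $0$ uniformly in $\eps$, so $C$ is indeed finite.

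There is no real obstacle: the corollary is essentially a restatement of the proposition in a cruder but more convenient form. The only small point to be careful about is verifying that the $o(\eps)Z_t(x)^2$ error term in \eqref{e:ASEPbrac} (or equivalently the $o(1)$ inside the parentheses above) is controlled uniformly in the configuration $\eta$ and in $\eps$; this is already implicit in the proof of Proposition~\ref{prop:HC}\ref{enu:HCMG}, since the expansion $q^a = 1 - a\sqrt\eps + o(\sqrt\eps)$ is uniform over $|a| \le 4j$.
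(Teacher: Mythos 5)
Your proof is correct and follows essentially the same route as the paper, which likewise deduces \eqref{eq:QVbd} directly from the intermediate identity \eqref{e:another-brac} combined with the boundedness of $\eta_t(x)$. The additional points you verify---the uniformity of the $o(1)$ term over configurations and the fact that $[2j]_q$ is bounded away from $0$ as $q \to 1$---merely make explicit what the paper's one-line proof leaves implicit.
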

\begin{proof}
This follows directly from \eqref{e:another-brac} and the boundedness of $ \eta_t(x) $.
\end{proof}

\begin{remark}
The same term $\nabla^+ Z_t(x) \nabla^- Z_t(x) $ as in \eqref{e:ASEPbrac} also appears in \cite[Eq.(3.15)]{BG}. The appearance of this term
indicates that we will need to adapt the ``key estimate"
in \cite[Lemma 4.8.]{BG} to our case.
Note also that if $j=\frac12$,  the coefficient of $Z_t(x)^2$ in $\frac{d}{dt}  \langle M(x),M(x) \rangle_t $
is nearly $\eps$, the same  with \cite{BG}.
\end{remark}
%
%
%

\section{Tightness, proof of Propositions~\ref{prop:Holder} and \ref{prop:stepEst}}
\label{sect:tight}

\begin{lemma}\label{lem:BDG}
Given any $ n\in\N $,
there exists a finite constant $ C $ such that,
for any deterministic function $f_s(x,x')$: $[0,\infty)\times\Z^2\to\R$
and any $ t\leq t' \in [0,\infty) $ with $ t'-t \geq 1 $,
\begin{equ}
	\Big\Vert
	\int_t^{t'}  f_s * dM_s (x) \Big\Vert_{2n}^2
\le
	C \eps \int^{ t' }_{ t }
	 \bar f_{s}^{\;2} * \Vert Z_s^2 \Vert_n (x) \,ds
\end{equ}
where
$\bar f_{s}(x,x')\eqdef \sup_{|s'-s| \leq 1 } | f_{s'}(x,x')| $.
\end{lemma}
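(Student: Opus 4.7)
The plan is to apply the Burkholder--Davis--Gundy inequality to the martingale
$N_s \eqdef \int_t^s f_r * dM_r(x) = \sum_{y\in\Z} \int_t^s f_r(x,y)\,dM_r(y)$
indexed by $s\in[t,t']$, and then invoke Proposition~\ref{prop:HC}\ref{enu:HCMG} together with the pointwise quadratic-variation bound \eqref{eq:QVbd} to control $\langle N\rangle_{t'}$.

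The key simplification comes from the indicator $\mathbf{1}_{\{x=y\}}$ in \eqref{e:ASEPbrac}, which forces the cross-covariations $\langle M(y),M(y')\rangle$ to vanish for $y\neq y'$. Consequently the quadratic variation has the diagonal form
\begin{equation*}
\langle N\rangle_{t'} \;=\; \sum_{y\in\Z} \int_t^{t'} f_s(x,y)^2 \, d\langle M(y)\rangle_s.
\end{equation*}
Combining BDG, which gives $\|N_{t'}\|_{2n}^2 \leq C \|\langle N\rangle_{t'}\|_{n}$, with the pointwise bound \eqref{eq:QVbd}, and then applying Minkowski's integral inequality to push $\|\cdot\|_n$ through the sum over $y$ and the time integral, one obtains
\begin{equation*}
\Big\| \int_t^{t'} f_s * dM_s(x) \Big\|_{2n}^2 \;\le\; C\eps \int_t^{t'} \sum_{y\in\Z} f_s(x,y)^2\, \|Z_s(y)^2\|_n \, ds \;=\; C\eps \int_t^{t'} f_s^{\,2} * \|Z_s^2\|_n(x)\, ds.
\end{equation*}
The trivial pointwise bound $|f_s|\le \bar f_s$ then yields the stated inequality.

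The argument is essentially standard once Proposition~\ref{prop:HC}\ref{enu:HCMG} and the corollary bound \eqref{eq:QVbd} are in hand; the only technical point is justifying BDG for the infinite sum over $y$, which is handled by first truncating to a finite spatial window (permissible because $f_s$ has compact support in $y$ in every application, or by using uniform $L^{2n}$ bounds on $Z_s$) and then passing to the limit by dominated convergence. I do not expect a genuine obstacle here. The supremum $\bar f_s$ appearing in the statement rather than $f_s$ itself is a harmless weakening of the natural estimate; presumably it is written into the statement because downstream in Section~\ref{sect:tight} the lemma will be combined with piecewise-constant-in-time approximations of $f$, and the modulus $\bar f_s$ conveniently absorbs the discretization error, the hypothesis $t'-t\ge 1$ ensuring that the unit-length window over which the supremum is taken fits naturally inside the integration interval.
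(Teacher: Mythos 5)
There is a genuine gap, and it sits at the very first step. Your claim that ``BDG gives $\Vert N_{t'}\Vert_{2n}^2\le C\Vert\langle N\rangle_{t'}\Vert_n$'' with the \emph{predictable} bracket $\langle N\rangle$ is false for jump martingales once $2n>2$: the BDG inequality bounds $\E\sup_s|N_s|^{2n}$ by $\E\,[N]_{t'}^{\,n}$ with the \emph{optional} quadratic variation $[N]$ (the sum of squared jumps), and this can be transferred to $\langle N\rangle$ only for continuous martingales or for $2n\le 2$. A counterexample in exactly the present setting: take $N_t=P_t-\lambda t$ with $P$ Poisson of rate $\lambda$, so $\langle N\rangle_T=\lambda T$ is deterministic; as $\lambda T\to 0$ one has $\E|N_T|^{2n}\asymp \lambda T$, which is not bounded by $C^n(\lambda T)^n$ for $n\ge 2$. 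Here the martingales $M(x)$ are precisely compensated Poisson integrals with jump sizes of order $\sqrt\eps\,Z$, and the regime of interest (order-one jump rates per site over unit time windows) is exactly the regime where predictable and optional brackets have different higher moments, so this is not a discrepancy that cancels. Your diagonal identification of the bracket via the indicator in \eqref{e:ASEPbrac}, the use of \eqref{eq:QVbd}, and the Minkowski step are all fine, but they control the wrong object.

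The paper's proof (adapting Lemma~3.1 of \cite{DemboTsai}) works with the optional bracket, written explicitly as $[R_\Cdot(x)]_{t'}=\sum_{x'}\sum_{s\in\mathfrak T(x')}f_{s^-}(x,x')^2(q^{\pm2}-1)^2Z_{s^-}(x')^2$ with $\mathfrak T(x')$ the jump times at $x'$. To estimate its $L^n$ norm it partitions $(t,t']$ into intervals $\mathcal T_i$ of length between $\tfrac12$ and $1$, bounds $|q^{\pm2}-1|\le C\sqrt\eps$, freezes $f$ and $Z$ at their suprema over each $\mathcal T_i$, pulls $\sup_{s\in\mathcal T_i}Z_s(x')$ back to $e^{2\sqrt\eps N_{\mathcal T_i}(x')}Z_{t_{i-1}}(x')$ via \eqref{eq:Z:ss}, and then uses that the jump count $N_{\mathcal T_i}(x')$ is stochastically dominated by a Poisson variable of constant rate independent of $Z_{t_{i-1}}(x')$, with finite exponential moments. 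This is exactly where the two hypotheses you dismissed as cosmetic are consumed: $\bar f_s$ appears because $f$ is replaced by its supremum over a unit window when freezing on $\mathcal T_i$, and $t'-t\ge 1$ guarantees the unit partition exists; so your closing speculation that $\bar f$ merely absorbs downstream discretization error is mistaken. To repair your argument, replace $\langle N\rangle$ by $[N]$ (or invoke a Rosenthal-type inequality supplementing $\langle N\rangle$ with a jump term $\E\sup_s|\Delta N_s|^{2n}$ --- whose control again forces the unit-window/$\bar f$ mechanism) and then run the partition-and-Poisson-domination argument above.
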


\begin{proof}
This proof is essentially by \cite[Lemma~3.1]{DemboTsai},
which we adapt into our setting.
Fix such $t, t' $ and let
$R_{t'}(x):=\int_t^{t'}  f_s * dM_s (x)$.
By the \ac{BDG} inequality,
\begin{equ}\label{e:applyBDG}
	\Vert R_{t'}(x)^2 \Vert_n
\le
	C \Vert [ R_\Cdot(x) ]_{t'} \Vert_n,
\end{equ}
where $[-]$ denotes the optional quadratic variation,
or more explicitly
\begin{equ}\
	[R_\Cdot (x)]_{t'}
=
	\sum_{x'}
	\sum_{s\in\mathfrak T(x')} f_{s^{-}}(x,x')^2
	(q^{\pm 2}-1)^2 Z_{s^{-}}(x')^2
\end{equ}
where $\mathfrak T(x')$ is the set of $s\in(t,t']$
at which a jump  occurs at the site $x'$, and the $\pm$ is dictated by the direction of the jump.
Next, letting $ k := \lceil t'-t \rceil $,
we partition $ (t,t'] $ into subintervals $\mathcal T_i = (t_{i-1}, t_i] $,
where $ t_i \eqdef t+(t'-t)\frac{i}{n} $.
Each $ \mathcal T_i $ has length $ \frac12 \leq |\mathcal T_i| \leq 1 $.
Using $|q^{\pm 2}-1| \le C \sqrt \eps $,
and replacing $f_s$ and $Z_s$ by their supremum over $\mathcal T_i$, we have
\begin{equ}\ 
    [R_\Cdot(x)]_{t'}
\le
     C \eps \sum_{i=1}^{n} \sum_{x'} N_{\mathcal T_i}(x')
	\bar f_{t_{i-1}}(x,x')^2
	\Big( \sup_{s\in\mathcal T_i}  Z_s(x')^2 \Big)
\end{equ}
where $N_I(x')$ is the number of jumps at $x'$ during the time interval $ \mathcal T_I $.
Further using
\begin{align}\label{eq:Z:ss}
	\sup_{s\in (s_1,s_2]} Z_s(x') \leq e^{2 \sqrt{\eps} N_I(x')} Z_{s_1}(x') \;,
\end{align}
the independence of $ N_{\mathcal T_i} $ and $ Z_{t_{i-1}}(x') $,
and the fact that $N_{\mathcal T_i}(x')$
is stochastically bounded by a Poisson random variable with constant rate,
one obtains the desired bound.
\end{proof}

%

Let $ R(t) $ be the continuous time random walk on $ \Z $, starting from $ x=0 $,
which jump symmetrically  $ \pm 1 $ step at rate $ \frac12 $.
Let $ p_t(x) = \P( R(t)=x ) $ denote the corresponding heat kernel.
We rewrite the discrete \ac{SHE} \eqref{duality}
in the following integrated form:
\begin{equ} \label{e:intDSHE}
	Z_t = p_t * Z_0 + \int_0^t p_{t-s} * dM_s \;.
\end{equ}

\begin{proof}[of Proposition~\ref{prop:Holder}]
Let $I_1$ and $I_2$ denote the first and second terms
on the RHS of \eqref{e:intDSHE}, respectively.

We begin by proving \eqref{e:uniform}.
First, by \cite[(A.24)]{DemboTsai} we have
the following bound on the standard heat kernel
\begin{equ} \label{e:ev-exp}
	(p_t * e^{a\eps |\Cdot|}) (x) \le C e^{a\eps |x|} \qquad \mbox{for } t\le \eps^{-2} \bar T \;.
\end{equ}
For $I_1$, by the triangle inequality we have
$
	\Vert I_1(t,x)^2\Vert_n = \Vert I_1(t,x)\Vert_{2n}^2
	\le ( p_t * \Vert Z_0(\Cdot)\Vert_{2n} (x))^2.
$
Combining this with \eqref{e:ev-exp} and \eqref{e:init-uniform}, we obtain
\begin{align}\label{eq:I1}
	\Vert I_1(t,x)^2 \Vert_{n} \leq C e^{2a\eps|x|}.
\end{align}
Turning to bounding $ I_2 $,
we assume $ t \geq 1 $ and apply Lemma~\ref{lem:BDG} with $ f_s(x,x')=p_{t-s}(x-x') $ to obtain
\begin{equ}
	\Vert I_2(t,x)^2 \Vert_n
\le
	C \eps \int_0^t
	 \bar p_{ t-s}^2*\Vert Z_s^2 \Vert_n (x) ds
\end{equ}
where $\bar p$ is the local supremum of $p$ defined as in Lemma~\ref{lem:BDG}.
By $p_t \le Cp_{t'}$ for $|t-t'|\le 1$ and the standard heat kernel estimate
$p_t \le Ct^{-\frac12}$,
\begin{equ}
	\Vert I_2(t,x)^2 \Vert_n
\le
	C \eps \int_0^t (t-s)^{-\frac12}
	\Big( p_{ t-s }*\Vert Z_s^2 \Vert_n (x)\Big) ds\;,
	\text{ for } t \geq 1\;.
\end{equ}
Combining this with \eqref{eq:I1} yields
\begin{equ}\label{eq:iter}
	\Vert Z_t^2(x) \Vert_{n}
	\leq
	C e^{2a\eps|x|}
	+ C \eps \int_0^t (t-s)^{-\frac12} \Big( p_{ t-s }*\Vert Z_s^2 \Vert_n (x)\Big) ds\;.
\end{equ}
The bound \eqref{eq:iter} was derived for $ t \geq 1 $,
but it in fact holds true also for $ t \leq 1 $.
This is so because, by \eqref{e:uniform} and \eqref{eq:Z:ss} with $ (s_1,s_2]=(0,t] $,
we already have $ \Vert Z^2_t(x) \Vert_{2n} \leq C e^{2a\eps|x|} $, for $ t \leq 1 $.
With this, iterating this inequality,
using the semi-group property $ p_s* p_{s'} = p_{s+s'} $ and \eqref{e:ev-exp},
we then arrive at
\begin{equ}
	\Vert Z_t^2(x) \Vert_{n}
	\leq
	\Big(
		C e^{2a\eps|x|}
		+ \sum_{j=1}^\infty
		 \frac{C^j}{j!} \Big(
		\eps\int_0^t s^{-1/2} ds \Big)^j
		e^{2a\eps|x|}
	\Big) \;.
\end{equ}
With $t\le \eps^{-2}\bar T$, the desired result \eqref{e:uniform} follows.

The bound \eqref{e:Holderx} is proved analogously.
Indeed,
\begin{equ}
 \Vert I_1(t,x) -I_1(t,x') \Vert_{2n}^2
	\le \Big( \sum_{\bar x} p_t(\bar x)
		\Vert Z_0(x-\bar x)-Z_0(x'-\bar x)\Vert_{2n}  \Big)^2 \;.
\end{equ}
By \eqref{e:init-Holder}, followed again by \eqref{e:ev-exp},
the preceding expression is bounded by
\begin{equ} \label{e:I1Holderx}
 \Big( \sum_{\bar x} p_t(\bar x) \,
		(\eps|x-x'|)^\alpha e^{a\eps(|x-\bar x|+|x'-\bar x|)}   \Big)^2
\le
 (\eps|x-x'|)^{2\alpha} e^{2a\eps(|x|+|x'|)} \;.
\end{equ}
For $\Vert I_2(t,x) -I_2(t,x') \Vert_{2n}^2$,
we apply Lemma~\ref{lem:BDG}
with $f_s(x,\bar x)=p_{t-s}(x'-\bar x) -p_{t-s}(x-\bar x)$,
use the fact that
\[
\big( p_{t-s}(x'-\bar x) -p_{t-s}(x-\bar x) \big)^2
\le
\big| p_{t-s}(x'-\bar x) -p_{t-s}(x-\bar x) \big| \,
\big( p_{t-s}(x'-\bar x) + p_{t-s}(x-\bar x) \big)
\]
and use the gradient estimate for the heat kernel, for instance \cite[(A.10)]{DemboTsai}:
\[
\big| p_{t-s}(x'-\bar x) -p_{t-s}(x-\bar x) \big|
	\leq
	C (1\wedge (t-s)^{-\frac{1}{2}-\frac{\alpha}{2}}) |x-x'|^\alpha \;.
\]
The rest of the arguments follow in the same way as the proof for \eqref{e:uniform}.

Next we prove \eqref{e:Holdert}.
Without lost of generality, we assume $ t < t'-1 $.
For $I_1$, using the semi-group properties $p_{t'}=p_{t'-t}*p_t$
and $\sum_{x_1} p_{t'-t}(x_1)=1$ we have
\begin{equ}\label{eq:I1:tt}
	I_1(t',x) - I_1(t,x) = \sum_{\bar x} p_{t'-t}(x-\bar x)(I_1(t,\bar x)-I_1(t,x)).
\end{equ}
By \eqref{e:I1Holderx}, we have
$
	\Vert I_1 (t,\bar x)-I_1 (t,x) \Vert_{2n}
	\le C
	(\eps|x-\bar x|)^\alpha e^{a\eps |x-\bar x|} e^{2a \eps |x|}
	 .
$
Using this and the estimate
$
	\sum_x |x|^\alpha p_{t'-t}(x) e^{a\eps |x|}
	\leq
	C |t'-t|^{\frac{\alpha}{2}}
$
in \eqref{eq:I1:tt},
one obtains the desired bound on $ \Vert I_1 (t,\bar x)-I_1 (t,x) \Vert_{2n} $.

Next, we write $I_2(t')-I_2(t)$ as the sum of
$J_1=\int_t^{t'} p_{t'-s}*dM_s$
and $J_2=\int_0^t (p_{t'-s}-p_{t-s})*dM_s$.
Applying the argument for bounding $ I_1 $ to bound the term $ J_1 $,
we obtain
\begin{align*}
	\Vert (J_1)^2 \Vert_{n}
	\leq
	C ( \eps^\alpha |t'-t|^\frac{\alpha}{2} e^{a\eps|x|}  )^2 \;.
\end{align*}
As for $ J_2 $, applying  Lemma \ref{lem:BDG} using
$
	(p_{t'-s}-p_{t-s})^2 \le |p_{t'-s}-p_{t-s}|\, (p_{t'-s}+p_{t-s})
$
followed by the estimate (see for instance \cite[(A.7)]{DemboTsai})
\begin{align*}
	|p_{t'}(x) - p_t(x)|
	\leq
	C (1\wedge t^{-\frac12-\alpha}) \,(t'-t)^\alpha
\end{align*}
one obtains the desired bound
$ \Vert J_2\Vert_{2n}^2 \leq C \eps^{2\alpha} |t'-t|^\alpha e^{2a\eps|x|} $.
Combining all these bounds completes the proof of the proposition.
%
%
\end{proof}

\begin{proof}[of Proposition~\ref{prop:stepEst}]
With $ Z^*_t(x) = \frac{1}{2 \sqrt \eps} Z_t(x) $,
similar to \eqref{eq:iter} we have
\begin{align}\label{eq:I1*}
	\Vert (Z^*_t(x))^2 \Vert_{2n}
	&\leq
	C(I_1^*(t,x))^2 + C \e \int_0^t (t-s)^{-1/2} \big( p_{t-s} * \Vert (Z^*_t)^2 \Vert_{n} \big)(x) ds
\end{align}
where $ I_1^*(t,x) = \frac{1}{\sqrt \e} (p_t * e^{-\sqrt{\e}|\Cdot|})(x) $.
With $ p_t \leq C/\sqrt{t} $, we have
$ |I_1^*(t,x)| \leq \frac{C}{\sqrt{\e t}} $.
Using this in \eqref{eq:I1*} yields
\begin{align}\label{eq:I1*:}
	\Vert (Z^*_t(x))^2 \Vert_{2n}
	&\leq
	\frac{C}{\sqrt{\e^2 t}} (p_t * e^{-\sqrt{\e}|\Cdot|})(x)
	+ C \e \int_0^t (t-s)^{-1/2} \big( p_{t-s} * \Vert (Z^*_t)^2 \Vert_{n} \big)(x) ds.
\end{align}
Now, iterate this equation using the semi-group property $ p_{s} * p_{s'} = p_{s+s'} $ to obtain
\begin{align*}
	\Vert (Z^*_t(x))^2 \Vert_{2n}
	&\leq
	\frac{C}{\sqrt{\e^2 t}} (p_t * e^{-\sqrt{\e}|\Cdot|})(x)
	+
	\sum_{j=1}^\infty C^j I_j(\e^{2}t) (p_t * e^{-\sqrt{\e}|\Cdot|})(x)
\end{align*}
where $ I_j(T) = \int_{\Delta_j(T)} (S_1\cdots S_{j+1})^{-1/2} dS_1\cdots dS_j $
and $ \Delta_j(T) := \{ (S_1,\ldots,S_{j+1}) \in (0,\infty)^{j+1} : S_1+\ldots+S_{j+1} = T \} $.
With $ I_{(j)}(T) = T^{(j-1)/2} \Gamma(1/2)^{j+1}/\Gamma((j+1)/2) $,
we have $ \sum_{j=1}^\infty C^j I_j(\e^{2}t) \leq C $, and consequently
\begin{align}\label{eq:Z*bd}
	\Vert (Z^*_t(x))^2 \Vert_{2n}
	&\leq
	\frac{C}{\sqrt{\e^2 t}} (p_t * e^{-\sqrt{\e}|\Cdot|})(x) \;.
\end{align}
Further using $ (p_t * e^{-\sqrt{\e}|\Cdot|})(x) \leq \frac{C}{\sqrt{\e^2 t}} $,
we conclude the desired bound \eqref{eq:momd}.

Turning to proving \eqref{eq:momdx},
similar to \eqref{eq:I1*:} we have
\begin{align}
	\notag
	\Vert (Z^*_t(x))^2 \Vert_{2n}
	&\leq
	\frac{C}{\sqrt{\e^2 t}}
	\Big| (p_t * e^{-\sqrt{\e}|\Cdot|})(x)  -(p_t * e^{-\sqrt{\e}|\Cdot|})(x') \Big|
\\
	\label{eq:I1*::}
	&+
	C |\e(x-x')|^{2\alpha} \e^{1-2\alpha} \int_0^t (t-s)^{-1/2-\alpha} \big( p_{t-s} * \Vert (Z^*_t)^2 \Vert_{n} \big)(x) ds.
\end{align}
Using $ |p_t(x+y)-p_t(y)| \leq C t^{-1/2-\alpha} |y|^{2\alpha} $
and \eqref{eq:Z*bd} to bound the respective terms on the RHS,
we conclude the desired bound \eqref{eq:momdx}.
\end{proof}

%

\section{Identifying the limit, proof of Proposition~\ref{prop:unique}}
\label{sect:unique}

In order to identify the limit of $\mathcal Z^\eps$, we recall
(for instance \cite[Proposition~4.11]{BG})
 that the mild solution $\mathscr Z$ to \eqref{e:SHE}
 with initial condition $\mathscr Z^\ic$ is equivalent to
 the {\it unique} solution of the martingale  problem
 with initial condition $\mathscr Z^\ic$, provided that
$ \Vert \mathscr Z^\ic(X) \Vert_2
	\le C e^{a |X|} $ for some $C,a>0$.
Also recall that a $C(\R_+,C(\R))$ valued process
$Z$  
 is said to {\it solve the martingale problem}
with initial condition $\mathscr Z^\ic$ if
$Z_0 =\mathscr Z^\ic$ 
in distribution, and
for all $\bar T>0$, there exists $a\ge 0$ such that
\begin{equ} \label{e:mart-uni}
\sup_{T\in[0,\bar T]} \sup_{X\in\R} e^{-a|X|}
	\E \big( Z_T(X)^2\big) <\infty
\end{equ}
and  for all $\varphi\in\mathcal C_c^\infty(\R)$,
\begin{equs}
	\label{eq:Nt}
	N_T(\varphi )
	& \eqdef
	(Z_T,\varphi )- (Z_0,\varphi)
	- \frac12 \int_0^T (Z_S,\varphi'')\,dS
\\
	\label{eq:LambdaT}
	\Lambda_T(\varphi)
	& \eqdef
	N_T(\varphi)^2 -\int_0^T (Z_S^2,\varphi^2 )\,dS
\end{equs}
are local martingales.
Here, $(\varphi,\psi)\eqdef \int_\R \varphi(X)\psi(X)\,dX$.

\begin{proof} [of Proposition~\ref{prop:unique}]
By \eqref{e:uniform}, 
any limit point of the family $\mathcal Z^\eps$ satisfies \eqref{e:mart-uni}.
Since $ Z^\e_0 \Rightarrow \mathscr Z^\ic $,
the initial condition of the martingale problem is also satisfied
for any limit point.

Define for all $t\in[0,\eps^{-2}\bar T]$,
$\varphi\in\mathcal C_c^\infty(\R)$
\begin{equ}
	(Z_{t},\varphi)_\eps
	\eqdef \eps_j\sum_{x\in\Z} \varphi(\eps_j x) Z_{t}(x) \;.
\end{equ}
Recall that $\eps_j$ was introduced in \eqref{e:calZ} as $\eps_j =2j\eps$.

Consider the microscopic analogs of \eqref{eq:Nt}--\eqref{eq:LambdaT} as
\begin{equs}
	\label{eq:Nte}
	N^\eps_T(\varphi) & \eqdef
	(Z_{\eps_j^{-2} T},\varphi)_\eps - (Z_0,\varphi)_\eps
	-\frac12 \int_0^{\eps_j^{-2}T}
	(\Delta Z_{s},\varphi)_\eps \,ds
\\
	\Lambda_T^\e(\varphi) &\eqdef
	N^\e_T(\varphi)^2
	- \langle N^\eps_T(\varphi)\rangle \;.
\end{equs}
Indeed, by Proposition~\ref{prop:HC},
$ N^\eps_T(\varphi) $ and hence $ \Lambda^\eps_T(\varphi) $ are martingales.
Further applying \eqref{e:ASEPbrac} to calculate $ \langle N^\eps_T(\varphi)\rangle $
and using the factor $ \mathbf{1}_{\{x=y\}}$
to re-write a double sum as a single sum over lattice sites,
we obtain the following expression for $ \Lambda_T^\e(\varphi) $:
\begin{equs}\label{eq:LambdaTe}
	\Lambda^\eps_T(\varphi) &\eqdef
	N_T^\eps(\varphi)^2
		- \eps_j^2 \int_0^{\eps_j^{-2}T} (Z_{s}^2,\varphi^2)_\eps \,ds
		+R_1^\eps (\varphi) +R_2^\eps (\varphi) +R_3^\eps (\varphi)
\end{equs}
where 
\begin{equs}
R_1^\eps(\varphi) &\eqdef
\eps_j^2 \,(\tfrac{2j}{[2j]_q}-1)
	\int_0^{\eps_j^{-2}T} (Z_{s}^2,\varphi^2)_\eps \,ds \;, \\
R_2^\eps(\varphi) &\eqdef  \frac{\eps_j}{[2j]_q}
\int_0^{\eps_j^{-2} T} ( \nabla^- Z_{s}\nabla^+ Z_{s},\varphi^2)_\eps \,ds \;, \\
R_3^\eps(\varphi) &\eqdef o(\eps^2)
\int_0^{\eps_j^{-2} T} ( Z_s^2,\varphi^2)_\eps \,ds \;.
\end{equs}

In \eqref{eq:Nte},
applying summation by part yields
$ (\Delta Z_{s},\varphi)_\eps = (Z_{s}, \Delta \varphi)_\eps $.
Further, as $ \varphi \in C^\infty_c(\R) $,
we have that $ \e_j^{-2} \Delta \varphi $ converges uniformly to $ \varphi'' $.
%
%
by comparing the expressions as in \eqref{eq:Nt}--\eqref{eq:LambdaT}
and \eqref{eq:Nte}--\eqref{eq:LambdaTe},
it clearly suffices to prove that $ \E(R^\e_i(\varphi))^2 \to 0$, for $i=1,2,3$.
By the uniform bound \eqref{e:uniform} on $Z$,
with $|\frac{2j}{[2j]_q}-1|\le C\eps$,
we clearly have $\E (R_{i}^\eps(\varphi)^2 ) \to 0$, for $ i=1,3 $.
To control $ R_2^\eps(\varphi)^2 $,
we follow \cite{BG} by using the ``key estimate'' as in Lemma~\ref{lem:key-est} in the following.
Indeed, letting $ \mathcal F_{t} \eqdef \sigma( Z_s(x) : x\in\Z, s\leq t ) $ 
denote the canonical filtration and let
\begin{align}\label{eq:U}
	U^\e(y,s,s') \eqdef
	\E ( \nabla^- Z_{s}(y)\nabla^+ Z_{s}(y) \,|\,  \mathcal F_{s'} ),
\end{align}
with $ R^\e_2(\varphi) $ defined as in the preceding, we have
\begin{equs}
\E (R_2^\eps(\varphi)^2 )& =
\frac{2\eps_j^4}{[2j]_q^2}
 \int_0^{\eps_j^{-2}T} \!\!\!\! ds \int_0^s ds'
	 \sum_{x,y\in\Z} \varphi(\eps x)^2 \varphi(\eps y)^2
	\E \Big(
	\nabla^- Z_{s'}(x)\nabla^+ Z_{s'}(x) U^\e(y,s,s') \Big) \;.
\end{equs}
With $ |\nabla^\pm Z_t(x)| \le C\eps^{\frac12} Z_t(x) $,
we further obtain
\begin{equs}
	\E (R_2^\eps(\varphi)^2 )
 & \le C \eps^5
 \int_0^{\eps_j^{-2}T} \!\!\!\!  ds \int_0^s ds'
 \sum_{x,y\in\Z} \varphi(\eps x)^2 \varphi(\eps y)^2
	\E \Big(
	Z_{s'}(x)^2 U^\e(y,s,s') \Big) \;. 		
	\label{e:double-E}
\end{equs}

Note if we simply use $ |\nabla^\pm Z_t(x)| \le C\eps^{\frac12} Z_t(x) $
to bound $ U^\e(y,s,s') $ as
	$ |U^\e(y,s,s')| \leq \e C Z^2_s(y) $,
and insert this bound into \eqref{e:double-E},
the resulting bound on $ \E(R^\e_2(\varphi)^2) $ is of order $ O(1) $,
(since the change of time and space variables to macroscopic variables gives $\eps^{-6}$),
which is insufficient for our purpose.
To obtain the desired bound $ \E(R^\e_2(\varphi)^2) \to 0 $,
we utilize the smoothing effect of the conditional expectation $ \E(\Cdot|\mathcal F_{s'}) $ in \eqref{eq:U} to show the following
\begin{lemma} \label{lem:key-est}
For all $\bar T>0$, $\delta>0$, there are constants $ a,C>0 $ such that
\begin{equ} \label{e:key-est}
	\sup_{x\in\Z} e^{-a\eps|x|}
	\E | U^\e(x,t,s) |
	\le C
	 \eps^{\frac32-\delta}(\e^{2}(t-s))^{-\frac12}
\end{equ}
for all $\sqrt\eps \le \e^{2} s< \e^{2}t \le \bar T$ and all $\eps>0$.
\end{lemma}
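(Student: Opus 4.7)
The plan is to exploit the mild formulation \eqref{e:intDSHE} restarted at time $s'$,
\[
Z_s = p_{s-s'} * Z_{s'} + \int_{s'}^s p_{s-u} * dM_u,
\]
in order to split $\nabla^\pm Z_s(x) = A^\pm(x) + B^\pm(x)$, where $A^\pm(x) := (\nabla^\pm p_{s-s'} * Z_{s'})(x)$ is $\mathcal F_{s'}$-measurable and $B^\pm(x) := \int_{s'}^s (\nabla^\pm p_{s-u}) * dM_u(x)$ is an $\mathcal F_{s'}$-martingale increment. Since $\E[B^\pm \mid \mathcal F_{s'}] = 0$ and $A^\pm$ is $\mathcal F_{s'}$-measurable, the cross terms drop out and
\[
U^\eps(x,s,s') = A^-(x) A^+(x) + \E[B^-(x) B^+(x) \mid \mathcal F_{s'}].
\]
The two summands are then estimated separately.

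For $A^-(x)A^+(x)$, I would use the polarization identity $Z_{s'}(y_1) Z_{s'}(y_2) = \tfrac12(Z_{s'}(y_1)^2 + Z_{s'}(y_2)^2 - (Z_{s'}(y_1) - Z_{s'}(y_2))^2)$ inside the double sum. The two single-variable pieces are annihilated by the mixed $\nabla^-$ and $\nabla^+$ acting on different variables, because $\sum_{y} \nabla^\pm p_t(y) = 0$. Only the squared-increment term survives:
\[
A^-(x) A^+(x) = -\tfrac12 \sum_{y_1,y_2} \nabla^- p_{s-s'}(x - y_1)\, \nabla^+ p_{s-s'}(x - y_2)\, (Z_{s'}(y_1) - Z_{s'}(y_2))^2.
\]
Taking absolute value and expectation, applying \eqref{e:Holderx}, splitting $|y_1 - y_2|^{2\alpha} \le C(|x-y_1|^{2\alpha} + |x-y_2|^{2\alpha})$, and using the heat-kernel-gradient estimates $\sum_z |\nabla p_t(z)| \le C t^{-1/2}$ and $\sum_z |z|^{2\alpha} |\nabla p_t(z)| \le C t^{\alpha - 1/2}$, produces a bound of the form $C\eps^{2\alpha}(s-s')^{\alpha-1} e^{a\eps|x|} = C\eps^2 \tau^{\alpha-1} e^{a\eps|x|}$ with $\tau := \eps^2(s-s')$. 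Taking $\alpha$ close enough to $1/2$ and using $\tau \ge \sqrt\eps$, this is bounded by $C\eps^{3/2-\delta}\tau^{-1/2} e^{a\eps|x|}$.

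For the martingale part, It\^o's isometry together with the orthogonality of $M(z)$ and $M(z')$ at distinct sites yields
\[
\E[B^-(x) B^+(x) \mid \mathcal F_{s'}] = \int_{s'}^s \sum_z \nabla^- p_{s-u}(x - z)\, \nabla^+ p_{s-u}(x - z)\, \E\Bigl[\tfrac{d\langle M(z)\rangle_u}{du} \Bigm| \mathcal F_{s'}\Bigr]\,du.
\]
The key input here is the discrete identity
\[
\sum_z \nabla^- p_t(z)\, \nabla^+ p_t(z) = p_{2t}(2) + p_{2t}(0) - 2 p_{2t}(1) = \Delta p_{2t}(1),
\]
(derived by expanding the product and using symmetry of $p_t$ together with the semigroup property), which via $\partial_t p_t = \tfrac12 \Delta p_t$ and $p_0(1)=0$ gives $\int_0^T \Delta p_{2t}(1)\,dt = p_{2T}(1) \le C(2T)^{-1/2}$. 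Writing $\tilde b(z,u) := \E[\tfrac{d\langle M(z)\rangle_u}{du} \mid \mathcal F_{s'}]$ and decomposing $\tilde b(z,u) = \tfrac{4\eps j^2}{[2j]_q} Z_{s'}^2(x) + [\tilde b(z,u) - \tfrac{4\eps j^2}{[2j]_q} Z_{s'}^2(x)]$, the constant principal part contributes $C\eps Z_{s'}^2(x)\cdot(s-s')^{-1/2}$, i.e., $C\eps^2 \tau^{-1/2} e^{a\eps|x|}$ in expectation, already inside the target. The remainder---spatio-temporal variation of $\E[Z_u^2(z) \mid \mathcal F_{s'}]$ plus the $\nabla^+ Z \nabla^- Z + o(\eps)Z^2$ corrections from \eqref{e:ASEPbrac}---is bounded using the absolute-value kernel estimate $\sum_z |\nabla^- p_t(z) \nabla^+ p_t(z)| \le C t^{-3/2}$ in tandem with the H\"older bounds \eqref{e:Holderx}, \eqref{e:Holdert} for $Z^2$ and the pointwise estimate $|\nabla^\pm Z| \le C\eps^{1/2} Z$.

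The main obstacle is this last step: retaining the signed temporal cancellation $\int_0^T \Delta p_{2t}(1)\,dt = p_{2T}(1)$ (rather than the divergent $\int_0^T t^{-3/2}\,dt$) is essential for the principal part of $\tilde b$, while the error estimates, which must forfeit this cancellation and pass to absolute values, still need to deliver the full $\eps^{3/2-\delta} \tau^{-1/2}$ decay uniformly in $\tau \ge \sqrt\eps$. The hypothesis $\eps^2 s \ge \sqrt\eps$ is precisely what converts the available H\"older exponent $\alpha < 1/2$ into the $\eps^{-\delta}$ tolerance; matching the powers of $\eps$ and $\tau$ across the two decompositions, though routine, is the most technically delicate aspect of the argument.
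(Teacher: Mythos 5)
Your decomposition has two attractive ingredients --- the restarted mild formulation killing the cross terms, the polarization trick for $A^-A^+$, and the exact identity $\sum_z \nabla^- p_t(z)\nabla^+ p_t(z) = \Delta p_{2t}(1)$, which is indeed a sharp, constructive form of the paper's \eqref{e:identity0} --- but the treatment of the martingale remainder has a genuine gap, and it sits exactly where the real difficulty of the lemma lives. By \eqref{e:ASEPbrac}, the bracket rate contains the term $-\frac{1}{[2j]_q}\nabla^+ Z_u(z)\nabla^- Z_u(z)$, whose conditional expectation given $\mathcal F_{s'}$ is precisely $U^\e(z,u,s')$ --- the quantity the lemma is estimating. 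Your proposal to bound it via $|\nabla^\pm Z|\le C\e^{1/2}Z$ and the absolute-value kernel bound gives $\e\int_{s'}^{s}\sum_z |K_{s-u}(x-z)|\,\E[Z_u^2(z)]\,du \le C\e\, e^{a\e|x|}$, with \emph{no decay in the gap} $G=s-s'$. The target in microscopic variables is $C\e^{1/2-\delta}G^{-1/2}e^{a\e|x|}$, so your bound suffices only for $G\le\e^{-1-2\delta}$ and fails by a factor of order $\e^{-1/2}$ when $G\sim \e^{-2}$, i.e.\ for all macroscopic time separations --- the only regime where the lemma has content. No Hölder input can repair this, because the term is not a perturbation: it is of the same order as the left-hand side. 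The paper's resolution is structural: it keeps this term as the unknown, sets $A_t=\sup_x e^{-a\e|x|}\E|U^\e(x,t,s)|$, derives the self-referential integral inequality \eqref{e:iterate-EE}, and iterates it; convergence of the iteration rests on the strict contraction estimate \eqref{e:identity1}, namely $\sum_x\int_0^\infty|\nabla^+p_t(x)\,\nabla^-p_t(x)|\,dt<1$. This Gronwall-type bootstrap is the missing idea in your write-up, and your own closing paragraph (errors ``must forfeit this cancellation and pass to absolute values'') correctly identifies the tension without resolving it.

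Two secondary points. First, you freeze the principal $Z^2$ part of the bracket at $(s',x)$, the conditioning end of the window, whereas the kernel $K_{s-u}(x-z)$ concentrates at the opposite end $u\approx s$; near that singularity the increment $\E|Z_u^2(z)-Z_{s'}^2(x)|$ is of order $\e^{\alpha}G^{\alpha/2}$ by \eqref{e:Holdert}, so the remainder is of size $\e^{1+\alpha}G^{\alpha/2}\approx \e^{3/2-\delta}G^{1/4}$, which again exceeds $\e^{1/2-\delta}G^{-1/2}$ once $G\gg\e^{-4/3}$. The fix is to freeze at the singular end, as the paper does by comparing with $Z_t(x)^2$ inside the conditional expectation: the Hölder increments are then weighted by $K_{t-\tau}(x-y)$ and integrate to $O(\e^{3/2-\delta})$, while the frozen term multiplies the signed tail $\int_{G}^\infty\sum_z K_\tau(z)\,d\tau = O(G^{-1/2})$ (equivalently $p_{2G}(1)$ in your formulation). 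Second, the hypothesis $\sqrt\e\le\e^2 s$ lower-bounds the \emph{conditioning time}, not the gap, so your invocation of ``$\tau\ge\sqrt\e$'' is unwarranted; it is also unnecessary, since for $G\le\e^{-1-2\delta}$ the trivial bound $|U^\e|\le C\e\,\E Z^2$ already implies \eqref{e:key-est}, and your polarization estimate for $A^-A^+$ is adequate on its own for $G\ge 1$. That polarization step, incidentally, is a genuinely cleaner alternative to the paper's Cauchy--Schwarz treatment of the four $\nabla^\pm I_t$, $\nabla^\pm N^t_s$ terms in \eqref{e:five-terms}; the proof fails only where it bypasses the self-referential iteration.
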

With this, $ \E(R^\e_2(\varphi)^2) \to 0 $ follows by standard argument
as in \cite[Proof of Proposition~4.11]{BG}.
We omit the details here and prove only Lemma~\ref{lem:key-est}.
%
%
\end{proof}

Proving Lemma~\ref{lem:key-est}
requires a certain integral identity on the heat kernel $ p_t(x) $ as in \cite[Lemma~A.1]{BG}.
Here, to shed light on the underlying structure of this identity,
we state and prove the following more general identity.

\begin{lemma} \label{lem:identity}
Let $p_t(x)$ be the transition probability of the continuous time symmetric simple random walk on $\Z^d$,
with the convention $p_t(x)=0$ for $t<0$. Then one has
\begin{equ} \label{e:d-dim-id}
\frac{1}{d} \sum_{x\in \Z^d} \sum_{n=1}^d \int_{-\infty}^\infty
	 \nabla_n p_{t+s} (x+y) \nabla_n p_{t+s'} (x+y') \,dt
= p_{|s-s'|}(y-y') \;, 
\end{equ}
for all $s,s'\in\R$ and $y,y'\in \R^d$,
where
\[
\nabla_n f(x_1,\dots,x_d)\eqdef f(x_1,\dots,x_n+1,\dots,x_d)
	-f(x_1,\dots,x_d) \;.
\]
\end{lemma}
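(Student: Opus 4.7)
The plan is to prove Lemma~\ref{lem:identity} by Fourier analysis on $\Z^d$. Define the Fourier series $\hat f(\theta):=\sum_{x\in\Z^d}f(x)e^{-i\theta\cdot x}$ on $\mathbb{T}^d:=[-\pi,\pi]^d$. With the natural normalisation in which $p_t$ satisfies $\partial_t p_t=\tfrac{1}{2d}\Delta p_t$, one has $\hat p_t(\theta)=e^{-t\lambda(\theta)}$, where
\[
\lambda(\theta):=\frac{1}{d}\sum_{n=1}^d(1-\cos\theta_n)\;\geq\;0,
\]
and the forward gradient carries Fourier multiplier $\widehat{\nabla_n f}(\theta)=(e^{i\theta_n}-1)\hat f(\theta)$. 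This $L^2$-structure is the engine behind \eqref{e:d-dim-id}.

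First I would apply Parseval together with the translation rule $\widehat{f(\cdot+z)}(\theta)=e^{i\theta\cdot z}\hat f(\theta)$ to rewrite, for fixed $s,s'$ and each $t$ with $t+s,\,t+s'\geq 0$,
\[
\sum_{x\in\Z^d}\nabla_n p_{t+s}(x+y)\,\nabla_n p_{t+s'}(x+y') \;=\; \frac{1}{(2\pi)^d}\int_{\mathbb{T}^d}e^{i\theta\cdot(y-y')}\,(2-2\cos\theta_n)\,e^{-(2t+s+s')\lambda(\theta)}\,d\theta,
\]
using $|e^{i\theta_n}-1|^2=2-2\cos\theta_n$. Next I would integrate in $t$ over $\R$; the convention $p_t\equiv 0$ for $t<0$ confines the integrand to $t\geq -\min(s,s')$, and a direct computation gives
\[
\int_{-\min(s,s')}^{\infty}e^{-(2t+s+s')\lambda(\theta)}\,dt\;=\;\frac{e^{-|s-s'|\lambda(\theta)}}{2\lambda(\theta)},
\]
using $s+s'-2\min(s,s')=|s-s'|$.

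Summing over $n=1,\ldots,d$ and dividing by $d$ then produces the crucial cancellation
\[
\frac{1}{d}\sum_{n=1}^d\frac{2-2\cos\theta_n}{2\lambda(\theta)}\;=\;1,
\]
which is literally the definition of $\lambda$. Thus the left-hand side of \eqref{e:d-dim-id} collapses to $\frac{1}{(2\pi)^d}\int_{\mathbb{T}^d}e^{i\theta\cdot(y-y')}\,e^{-|s-s'|\lambda(\theta)}\,d\theta$, which equals $p_{|s-s'|}(y-y')$ by Fourier inversion. The only mild obstacle is the apparent singularity of $1/\lambda(\theta)$ at $\theta=0$; however $2-2\cos\theta_n=O(\theta_n^2)$ while $\lambda(\theta)\asymp|\theta|^2/d$ near the origin, so each $n$-summand is bounded on $\mathbb{T}^d$, the summed integrand is identically $1$, and the Fubini exchange between $t$- and $\theta$-integration is justified throughout.
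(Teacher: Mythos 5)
Your proof is correct. The paper also proves the lemma by Fourier analysis, but it transforms in \emph{both} space and time: it views the left-hand side of \eqref{e:d-dim-id} as a space-time convolution of $\nabla_n p$ with its time-space reflection, computes the space-time Fourier transform of $p$ as $(-\lambda_k+i\omega)^{-1}$, and matches the resulting multiplier $\frac1d\sum_{n=1}^d\frac{2-2\cos k_n}{\lambda_k^2+\omega^2}$ against the space-time transform of $t\mapsto p_{|t|}$, namely $\frac{-2\lambda_k}{\lambda_k^2+\omega^2}$. Both arguments therefore hinge on exactly the same cancellation, $\frac1d\sum_{n}|e^{i\theta_n}-1|^2=2\lambda(\theta)$; the difference is that you take only the spatial transform and integrate out $t$ explicitly (your computation $\int_{-\min(s,s')}^\infty e^{-(2t+s+s')\lambda}\,dt=e^{-|s-s'|\lambda}/(2\lambda)$, with $s+s'-2\min(s,s')=|s-s'|$, is right), rather than passing through a temporal Fourier transform and inverting. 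Your route is somewhat more elementary and, arguably, more rigorous as written: it avoids the distributional inversion of the space-time multiplier, which is singular at $(\omega,k)=(0,0)$, and you explicitly address the only delicate points — the boundedness of $(2-2\cos\theta_n)/(2\lambda(\theta))$ by $d$ near $\theta=0$ and the Fubini/Tonelli exchange (legitimate since the absolute integrand is nonnegative with finite iterated integral). What the paper's version buys is brevity and a transparent ``convolution becomes a product of multipliers'' picture. One shared quirk worth noting: the statement allows $y,y'\in\R^d$, but both proofs really use lattice translations (your rule $\widehat{f(\cdot+z)}(\theta)=e^{i\theta\cdot z}\hat f(\theta)$ requires $z\in\Z^d$); this suffices for the application, where $y,y'\in\{0,-1\}$.
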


\begin{proof}
Let $\mathscr F_x,\mathscr F_t  $ denote the Fourier transform operators
in the spatial variable and time variable respectively,
and let $\mathscr F$ denote the Fourier transform operator in both variables.
Since $p$ solves $\partial_t p = \frac{1}{2d} \Delta p$ with initial condition $\mathbf{1}_{x=0}$, and $e^{ik\cdot x}$
is the eigenfunction of $ \frac{1}{2d} \Delta$ with eigenvalue
$\lambda_k \eqdef \frac{1}{d} \sum_{n=1}^d (\cos k_n-1)$,
we have
\begin{equ}
(\mathscr F p)\, (\omega,k)    =
	\frac{1}{-\lambda_k+ i\omega}  \;.
\end{equ}
The LHS of \eqref{e:d-dim-id} can be written as
$\frac{1}{d} \sum_{n=1}^d (\nabla_n p \,\hat{*} \widetilde{ \nabla_n p})_{s-s'}(y-y') $
where 
\[
\widetilde{ \nabla_n p}  (t,x)=\nabla_n p(-t,-x)
\]
 denotes reflected function,
and $ \hat{*} $ denotes the space-time convolution, as
\[
 (f \hat{*} g)_s(y) \eqdef 
\int_{-\infty}^{\infty} \sum_{x\in\Z^d} f_{t+s}(y+x) g_{-t}(-x) dt \;.
\]
Therefore the Fourier transform of the LHS of \eqref{e:d-dim-id}
is equal to
\begin{equ}
\frac{1}{d} \sum_{n=1}^d
\Big| \frac{e^{ik_n}-1}{-\lambda_k+ i\omega}   \Big|^2
= \frac{1}{d} \sum_{n=1}^d
	\frac{2-2\cos k_n}{\lambda_k^2+ \omega^2}  \;. \label{e:idenLHS}
\end{equ}
On the other hand, for the RHS of \eqref{e:d-dim-id}, one has
$
(\mathscr F_x p_{|t|})\, (k)  = e^{\lambda_k |t|}
$.
Further take Fourier transform in $t$, one has
\footnote{Here we use the fact that for any $a>0$,
$\mathscr F_t e^{-a|t|} =\frac{2a}{a^2+\omega^2}$.}
\[
\mathscr F_t (\mathscr F_x p_{|\Cdot|}(\Cdot))\, (\omega,k)  = \frac{-2\lambda_k}{\lambda^2+\omega^2} \;,
\]
which is equal to \eqref{e:idenLHS}.
\end{proof}

\begin{remark}
A continuous version of \eqref{e:d-dim-id} for $d=1$
is stated in the recent paper \cite[Proof of Lemma~6.11]{KPZJeremy} (up to a factor $2$
on the LHS because the heat operator is defined as $\partial_t-\Delta$ therein),
and is used to show that the logarithmically divergent renormalization constants
add up to a finite constant $c$ and if the KPZ equation is only spatially regularized then $c=0$.
\end{remark}

Now, setting $d=1$, $s=s'=0$ and $y,y'\in\{0,-1\}$, one recovers \cite[Lemma~A.1]{BG}
\begin{equ}
	\sum_x \int_0^\infty \nabla^+ p_t(x) \nabla^- p_t(x)  \,dt =0 \;,
	\label{e:identity0}
\end{equ}
and, by using also the Cauchy-Schwartz inequality, we also obtain \cite[Lemma~A.2]{BG}
\begin{equ}
\sum_x \int_0^\infty \big|\nabla^+ p_t(x) \nabla^- p_t(x) \big| \,dt
< \!\!\! \prod_{\sigma\in\{+,-\}}
\Big( \sum_x  \int_0^\infty (\nabla^\sigma p_t(x))^2 \,dt  \Big)^\frac12
=1\cdot  1=1 \;.
\label{e:identity1}
\end{equ}



\begin{proof}[of Lemma~\ref{lem:key-est}]
The proof follows the same argument as in \cite[Lemma~4.8]{BG}.

Let $ N_s^t(x) \eqdef \int_0^s p_{t-\tau} * dM_\tau $
so that $Z_t(x)=I_t(x)+N_t^t(x)$ where $I_t =p_t*Z_0$.
For $s\le r\le t$, one has
\begin{equs} \label{e:dNdN}
\E \Big(
		\nabla^- N_{r}^t(x)\nabla^+ N^t_{r}(x)
	\,\big|\, \mathcal F_{s}\Big)
&=\nabla^- N_{s}^t(x)\nabla^+ N^t_{s}(x) \\
 & \qquad + \E\Big(  \int_s^r K_{t-\tau} * d\langle M(\cdot),M(\cdot)\rangle_\tau (x)
\,\Big|\, \mathcal F_s\Big)
\end{equs}
where
\begin{equ} \label{e:def-K}
	K_t(x) \eqdef \nabla^+ p_t(x) \nabla^- p_t(x) \;.
\end{equ}
With $ U^\e(y,t,s) $ defined as in \eqref{eq:U} and with $\E(  N_r^t(x) | \mathcal F_s )=N_s^t(x)$,
one has by \eqref{e:dNdN}
\begin{equs}
	U^\e(y,t,s)
	&= \nabla^- I_t(x) \nabla^+ I_t(x)
	+ \nabla^- I_t(x) \nabla^+ N_s^t(x)
	+ \nabla^- N_s^t(x) \nabla^+ I_t(x)
\\
	&+
	\nabla^- N_{s}^t(x)\nabla^+ N^t_{s}(x)
	+ \E\Big(  \int_s^t K_{t-\tau} * d\langle M(\Cdot) \rangle_\tau (x)
	\,\Big|\, \mathcal F_s\Big) \;.   \label{e:five-terms}
\end{equs}

We bound the $L^1$-norms (i.e. $\E|\Cdot|$) of the terms on the RHS.
For the first four terms,
by the Cauchy-Schwartz inequality one needs only to show
\begin{align}\label{eq:IN:bd}
	\E(\nabla^\pm I_t(x))^2 \;, \; \E(\nabla^\pm N^t_s(x))^2 \leq C \eps^{\frac12}(t-s)^{-\frac12}e^{2a\eps |x|}.
\end{align}
To bound $ \nabla^\pm I $, we use \eqref{e:init-uniform} to obtain
\begin{equs}
	\E \Big( \big(\nabla^\pm I_t(x) \big)^2 \Big)
	&=
	\sum_{y,y'} \nabla^\pm p_t(y)\nabla^\pm p_t(y')
	\,\E \big( Z_0(x-y) Z_0(x-y')\big)
\\
	&\le
	C e^{2a\eps |x|} \Big( \sum_y  \nabla^\pm p_t(y)\,e^{a\eps |y|}  \Big)^2 \;.
\end{equs}
Using \cite[(A.26)]{DemboTsai} with $ v=1 $,
we bound the RHS by $ C e^{2a\eps |x|} t^{-1} $.
Further expressing $ t^{-1} $ as $ t^{-1/2} t^{-1/2} $,
and applying $ t^{-\frac12} < (t-s)^{-\frac12} $
and $ t^{-1/2} \leq \e^{3/4} $ (since we assume $ \e^{2} t \geq \e^{1/2} $),
we obtain desired bound on $ \E(\nabla^\pm I)^2 $ as in \eqref{eq:IN:bd}.
Turning to bounding $ \E(\nabla^\pm N)^2 $, one has
\begin{align*}
	\E\Big( \big(\nabla^\pm N_s^t(x) \big)^2\Big)
	 &=
	\E \int_0^s \sum_y \big(\nabla^\pm p_{t-\tau} \big)^2
	* d\langle M \rangle_\tau \;
\\
	&\leq	
	C
	\int_0^s
	\Big( \sup_y |\nabla^+ p_{t-\tau}(y)| \Big)
	\Big( |\nabla^- p_{t-\tau}|*
	\E|\tfrac{d~}{d\tau}\langle M \rangle_\tau| \Big)(x) d\tau \;.
\end{align*}
By \eqref{e:another-brac} and the uniform bound \eqref{e:uniform},
one has $ \E|\frac{d}{d\tau}\langle M(y) \rangle_\tau| \le C\eps e^{a\eps |y|}$;
we then apply the estimates \cite[(A.26), (A.28)]{DemboTsai} with $ v=1 $
to obtain
\begin{align*}
	\E \Big( \big(\nabla^\pm N_s^t(x) \big)^2\Big)
	\leq
	C \e e^{2a|x|} \int_0^s (t-\tau)^{-3/2} ds.
\end{align*}
Upon integrating over $ \tau $,
we obtain the desired bound on $ \E(\nabla^\pm N)^2 $ as in \eqref{eq:IN:bd}.

To bound the last term on the RHS of \eqref{e:five-terms},
we use the explicit expression of the predictable quadratic variation \eqref{e:ASEPbrac}
to re-write the last term on the RHS of \eqref{e:five-terms} as
$ I_1 + I_2 +  I_3 $ where
\begin{equs}
	I_1 (s,t,x) &\eqdef \frac{4\eps j^2}{[2j]_q}\sum_y \int_s^t K_{t-\tau}(x-y)\,
		\E (Z_\tau(y)^2 \,|\,\mathcal F_s) \,d\tau \;,
\\
	I_2 (s,t,x) &\eqdef -\frac{1}{[2j]_q} \sum_y \int_s^t K_{t-\tau}(x-y)\,
		\E (\nabla^-Z_\tau(y)\nabla^+Z_\tau(y) \,|\,\mathcal F_s) \,d\tau  \;,
\\
	I_3 (s,t,x) &=  o(\eps) \sum_y \int_s^t K_{t-\tau}(x-y)\,
		\E (Z_\tau(y)^2 \,|\,\mathcal F_s) \,d\tau  \;.
\end{equs}
Indeed, $  0\leq I_3 \leq I_1 $ for all $ \e $ small enough,
so we drop $ I_3 $ in the following.

To bound $I_1$ we apply the  identity 
\eqref{e:identity0} to obtain
\begin{equs} \ 
	I_1 (s,t,x)
	&=\frac{4\eps j^2}{[2j]_q}  \sum_y \int_s^t K_{t-\tau}(x-y)\,
	\E (Z_\tau(y)^2-Z_t(x)^2 \,|\,\mathcal F_s) \,d\tau
\\
	&\quad
	+\frac{4\eps j^2}{[2j]_q}  \E (Z_t(x)^2 \,|\,\mathcal F_s)
	\sum_y \int_{t-s}^\infty K_\tau(x-y) \,d\tau \;.
\end{equs}
Hence $ |I_1(s,t,x)| \leq C (I_{11}(s,t,x)+I_{12}(s,t,x)) $, where
\begin{align}
	\label{eq:I11}
	I_{11}(s,t,x)
	&\eqdef
	\e \sum_y \int_s^t K_{t-\tau}(x-y)\,
		\E |Z_\tau(y)^2-Z_t(x)^2 | \,d\tau
\\
	\label{eq:I12}
	I_{12}(s,t,x)
	&\eqdef
	\e \E (Z_t(x)^2 )
	\int_{t-s}^\infty \sum_y K_\tau(x-y) \,d\tau \;.
\end{align}
With $ K $ defined as in the preceding,
applying \cite[(A.26), (A.28)]{DemboTsai} with $ v=1 $,
we obtain $ \sum_{y} |K_\tau(x-y)| \leq C(1\wedge \tau^{-3/2}) $.
Using this and the uniform bound \eqref{e:uniform} in \eqref{eq:I12},
we obtain the desired bound on $ I_{12} $ as
\begin{equ} \label{e:I1-2nd}
	\E |I_{12}(s,t,x)|
	\leq
		C \e e^{a\eps |x|} \int_{t-s}^\infty \tau^{-\frac32} \,d\tau
	\leq
		C \e e^{a\eps |x|} (t-s)^{-\frac12} \;.
\end{equ}
Next, the idea of controlling $ I_{11} $
is to use the fact that $ K_{t-\tau}(x-y) $
concentrates on values of $ (\tau,y) $ which are close to $ (t,x) $,
and that, thanks to the H\"older estimates \eqref{e:Holderx}--\eqref{e:Holdert},
$ |Z_\tau(y)^2 -Z_t(x)^2| $ is small when $ (\tau,y) \approx (t,x) $.
More precisely,
with
\begin{align*}
	|Z_\tau(y)^2-Z_t(x)^2|
	\leq
	(Z_\tau(y)+Z_t(x)) \big( |Z_\tau(y)-Z_t(y)| + |Z_t(y)-Z_t(x)| \big)
\end{align*}
we use the Cauchy--Schwarz inequality and the H\"older estimates
\eqref{e:Holderx}--\eqref{e:Holdert} for $ \alpha = \frac12 -\delta $
to obtain
\begin{align*}
	\E |Z_\tau(y)^2-Z_t(x)^2|
	\le
	C \e^{\frac12 -\delta} e^{a\eps (|x|+|y|)} \Big( |y-x|^{\frac12-\delta}+
	(|t-\tau|\vee 1)^{\frac14-\delta/2} \Big) \;.
\end{align*}
Inserting this into \eqref{eq:I11},
after the change of variables $ t-\tau\mapsto \tau $ and $ x-y\mapsto y$,
we arrive at
\begin{equs}
	\E |I_{11}(s,t,x)|
	\leq
	C &\eps^{\frac32-\delta} e^{a\eps |x|}
	\int_0^{\eps^{-2} \bar T}
	\Big( \sup_y |\nabla^+ p_{\tau} (y)| \Big)
\\
	&
	\quad\quad
	\times
	\Big(
		\sum_y  | \nabla^- p_{\tau} (y) |
		\,e^{a\eps |y|}
		( |y|^{\frac12}+(|\tau|\vee 1)^{\frac14} )
	\Big)
	\,d\tau \;.
\end{equs}
Further using \cite[(A.26), (A.28)]{DemboTsai} with $ v=1 $,
to bound the terms within the integral,
we obtain
\begin{equs}
	\E |I_{11}(s,t,x)|
	\leq
	C \eps^{\frac32-\delta} e^{a\eps |x|}
	\int_0^{\eps^{-2} \bar T}
	(1\wedge \tau^{-1}) \tau^{-1/4} d\tau
	\leq
	C \eps^{\frac32-\delta} e^{a\eps |x|}  \;.
\end{equs}
With $ (t-s)^{-1/2} \leq t^{-1/2} \leq {\bar T}^{-1/2} \e^{-1} $,
the desired bound
$
	\E |I_{11}(s,t,x)|
	\leq
	C \eps^{\frac12-\delta} e^{a\eps |x|} (t-s)^{-1/2}
$
follows.

So far we have obtained the desired bounds on all the terms on the RHS of \eqref{e:five-terms}
except for the term $ I_2 $ from the last term in \eqref{e:five-terms};
but $I_2$ contains the same conditional expectation
on the LHS of \eqref{e:five-terms}.
Define $A_t$ to be the LHS of \eqref{e:key-est}.
Collecting the bounds for  the terms in \eqref{e:five-terms},
then multiplying both sides by $e^{-3a\eps|x|}$ and taking supremum, one has
\begin{equ} \label{e:iterate-EE}
A_t \le C\eps^{\frac12-\delta}(t-s)^{-\frac12}
+
\sum_y \int_s^t  |K_{t-\tau} (y)| e^{a\eps|y| } A_\tau \,d\tau \;,
\end{equ}
where a change of variable $x-y\mapsto y$ is preformed.
The desired estimate \eqref{e:key-est} now
follows by iterating \eqref{e:iterate-EE} as in \cite[Lemma~4.8]{BG}.
%
\end{proof}

\section{Remarks on ASIP($q,k$)}

The asymmetric inclusion process with parameters $q,k$ (ASIP($q,k$) for short) is introduced in \cite{CGRS2015}, which also enjoy a self-duality property similar to that of ASEP($q,j$). 
In this section we apply our methods
in Section~\ref{sect:SHE} to derive a microscopic Cole-Hopf transformation of ASIP($q,k$), 
and discuss the possibility of showing convergence to the KPZ equation.
Following \cite{CGRS2015}, we consider the process on 
the finite lattice $ \Lambda_L =\{1,\dots,L\} $.

\begin{definition} (ASIP($q,k$) on $ \Lambda_L $.)
Let $q\in(0,1)$ and  $k\in\R_+$ be a  positive real number.
Denote by  $\widetilde\eta(x) \in \N$
the occupation variable, i.e. the number of particles at site $ x\in \Lambda_L $.
Note that $\widetilde\eta(x)$ can be any non-negative integer.
The ASIP($q,k$) is a continuous-time Markov process
on the state space
$\N^{\mathbf L}$
defined by: at any given time $ t\in[0,\infty) $,
a particle jumps from site $ x $ to site $ x+1 $
at rate
%
\begin{align*}
	\widetilde{c}_q^+(\widetilde{\eta},x)
	&= \frac{1}{2[2k]_q} q^{\widetilde{\eta}(x)-\widetilde{\eta}(x+1)+(2k-1)}
	[\widetilde{\eta}(x)]_q
	 [2k+\widetilde{\eta}(x+1)]_q
\end{align*}
and from site $ x+1 $ to site $ x $ at rate
\begin{align*}
	\widetilde{c}_q^-(\widetilde{\eta},x)
	&= \frac{1}{2[2k]_q} q^{\widetilde{\eta}(x)-\widetilde{\eta}(x+1)-(2k-1)}
	[2k + \widetilde{\eta}(x)]_q [\widetilde{\eta}(x+1)]_q
\end{align*}
independently of each other.
\end{definition}

As in Definition~\ref{def:ASEPqj}
we define the {\it centered} occupation variable
$\eta(x) \eqdef \widetilde \eta(x)-k \in \N -k$
and the corresponding jumping rate
\begin{equ}\label{e:ratesASIP}
c_q^\pm (\eta,x)
	=  \frac{1}{2[2k]_q} q^{\eta(x)-\eta(x+1) \pm (2k-1)}
	\,[\eta(x) \mp k]_q \, [\eta(x+1) \pm k]_q    \;.
\end{equ}
Define $\eta^{x,y}$ in the preceding.
With these notations, the ASIP$(q,k)$ has the generator
\begin{equ}\label{genZASIP}
	({\cal L}f)(\eta) =\sum_{x\in \Lambda_L} ({\cal L}_{x,x+1}f)(\eta)
\end{equ}
where
$
({\cal L}_{x,x+1}f)(\eta)
 =  c^+_q (\eta, x) \, (f(\eta^{x,x+1}) - f(\eta) )
 + c^-_q (\eta, x) \, (f(\eta^{x+1,x}) - f(\eta) ).
$
\begin{remark}\label{rmk:ASEPtoASIP}
By comparing \eqref{e:rates} and \eqref{e:ratesASIP},
we find that the \emph{generator} of ASEP($ q,k $) is
converted to that of ASIP($ q,j $) by letting $ j\mapsto -k $.
\end{remark}

The article \cite{CGRS2015} raised up the following question.
\begin{question}
Can the ASIP$(q,k)$ be constructed on the entire $\Z$?
\end{question}

Define the processes $h$ and $Z$ in the same way as in \eqref{e:def-h}
and \eqref{e:Z-defn}, with respect to the ASIP($q,k$) occupation configuration
$\eta$. Set
\begin{align}\label{nu-ASIP}
	\nu \eqdef \big( \tfrac{[4k]_q}{2[2k]_q} -1 \big) / \ln q\;.
\end{align}
Parallel with Proposition~\ref{prop:HC},
we have

\begin{proposition}   
	We have that
	\begin{equ}\label{dualityASIP}
		d Z_t(x) = \tfrac12 \Delta Z_t(x)\,dt + dM_t(x)
	\end{equ}
	where $ M_\Cdot(x) $, $ x\in\mathbf L $, are martingales.
\end{proposition}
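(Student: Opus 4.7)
The proof closely parallels that of Proposition~\ref{prop:HC}\ref{enu:HCdr}, and I would exploit the algebraic similarity highlighted in Remark~\ref{rmk:ASEPtoASIP}. Since each jump changes $h_t(x)$ by $\pm 1$, including the factor $q^{\nu t}$ from \eqref{e:Z-defn} yields, exactly as in \eqref{eq:HC:}--\eqref{e:Omega}, the semimartingale decomposition
\begin{equ}
	dZ_t(x) = \Omega\, Z_t(x)\,dt + dM_t(x),
\end{equ}
where $M_t(x)$ is built from the compensated Poisson processes driving the jumps on bonds adjacent to $x$, and
\begin{equ}
	\Omega = (q^{2}-1)\, c^+_q(\eta,x) + (q^{-2}-1)\, c^-_q(\eta,x) + \nu \ln q,
\end{equ}
now with the ASIP rates \eqref{e:ratesASIP}. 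Since $\Lambda_L$ is finite, the sum in \eqref{genZASIP} is finite, so the construction of $Z_t$ together with the martingale property of $M_t(x)$ follow from standard theory for finite-state continuous-time Markov chains. The proposition thus reduces to the pointwise identity $\Omega\, Z_t(x) = \tfrac12 \Delta Z_t(x)$.

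The identity \eqref{Lap:Z} depends only on the definition \eqref{e:Z-defn} of $Z$ and not on the rates, so it still holds. My plan is to establish
\begin{equ}\label{eq:targetASIP}
	2[2k]_q\bigl(\Omega - \nu \ln q\bigr)
	= [2k]_q\bigl(q^{2\eta(x)} + q^{-2\eta(x+1)} - 2\bigr) - [4k]_q + 2[2k]_q,
\end{equ}
which, combined with \eqref{Lap:Z} and the choice \eqref{nu-ASIP} of $\nu$, yields exactly $\Omega\, Z_t(x) = \tfrac12 \Delta Z_t(x)$. The derivation of \eqref{eq:targetASIP} would proceed exactly as in the proof of Proposition~\ref{prop:HC}\ref{enu:HCdr}: expand every $q$-number via $[n]_q = (q^n - q^{-n})/(q - q^{-1})$, multiply out the resulting four-exponential products, and collect like terms. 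The sign structure of $c^+_q$ versus $c^-_q$ ensures that all monomials involving both $\eta(x)$ and $\eta(x+1)$ cancel, leaving only $q^{\pm 2\eta(x)}$, $q^{\pm 2\eta(x+1)}$, and $q^{\pm 4k}$ contributions that recombine via $q^a - q^{-a} = (q - q^{-1})[a]_q$.

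Equivalently and more economically, I would observe that \eqref{eq:targetASIP} is obtained from the ASEP identity displayed in the proof of Proposition~\ref{prop:HC}\ref{enu:HCdr} by the formal substitution $j \mapsto -k$ of Remark~\ref{rmk:ASEPtoASIP}; indeed $[-n]_q = -[n]_q$ flips the signs of $2[2j]_q$ and $[4j]_q$ consistently on both sides. I do not anticipate a substantive obstacle: the core algebra is a direct transcription of the ASEP case, and the only structural novelty, namely the unboundedness of $\widetilde\eta(x)$, is immaterial for the identity, which holds pointwise in $\eta(x),\eta(x+1)$. The truly delicate point hinted at by the question following \eqref{genZASIP}, namely extending the process from $\Lambda_L$ to all of $\Z$ where the unbounded ASIP rates preclude standard construction, lies outside the scope of the statement to be proved here.
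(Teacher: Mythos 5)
Your proposal is correct and follows essentially the same route as the paper: the paper likewise reduces the computation of $\Omega Z_t(x)$ to the ASEP case via the substitution $j \mapsto -k$ of Remark~\ref{rmk:ASEPtoASIP} and then invokes \eqref{nu-ASIP}. Your target identity is in fact stated more carefully than the paper's compressed version (which garbles the right-hand side slightly), and your observation that the unboundedness of $\widetilde\eta$ is immaterial for this pointwise algebraic identity matches the paper's implicit stance.
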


\begin{proof}
Proceeding as \eqref{e:Omega} and \eqref{e:M}
we have that
\begin{equ}
	dZ_t(x) =  \Omega Z_t(x)  \,dt
			+ dM_t(x)  \;,
		\qquad
	\Omega= \sum_{\sigma =\pm}(q^{2\sigma}-1)
		c_q^\sigma (\eta,x)
			+\nu \ln q
\end{equ}
where $c_q^\pm$ is now defined as \eqref{e:ratesASIP},
and the martingales $\{M_t(x)\}_{x\in \mathbf L}$  are defined as
\eqref{e:M} with the ASIP($q,k$) rates \eqref{e:ratesASIP}.
To compute $ \Omega Z_t(x) $, by Remark~\ref{rmk:ASEPtoASIP},
we simply perform the substitution $ j \mapsto -k $
in the in the proof of Proposition~\ref{prop:HC}\ref{enu:HCdr},
whereby obtaining 
$ [2k]_q  (\Omega -\nu \ln q )=[2k]_q \Delta Z(x) -[4k]_q+2[2k]_q $.
With this and \eqref{nu-ASIP}, the statement \eqref{dualityASIP} follows.
\end{proof}

We turn to consider the bracket process of  ASIP($q,k$).
As in the proof of Proposition~\ref{prop:HC}\ref{enu:HCMG},
we compute
\begin{equs}
\frac{d}{dt}&\langle M(x),M(y) \rangle _t
	=\frac{\mathbf{1}_{\{x=y\}}}{2[2k]_q} Z_t(x)^2   \\
& \times \Big( q \, (q^{2\eta(x)}-q^{2k} )\,(q^{2k}-q^{-2\eta(x+1)} )
	 -q^{-1}\,(q^{2\eta(x)}-q^{-2k} )\,(q^{-2k} + q^{-2\eta(x+1)} )
	 	\Big)
\end{equs}
where $q=e^{-\sqrt{\eps}}$.
But the occupation variable $\eta(x)$ is unbounded ASIP($q,k$),
so the argument of Taylor expansion in $\sqrt\eps$ in
the proof of Proposition~\ref{prop:HC}\ref{enu:HCMG}
is not useful.

\begin{question}
Does ASIP$(q,k)$ converge to the KPZ equation under the same scaling as studied in ASEP($ q,j $)?
\end{question}

%
%

\bibliographystyle{./Martin}
\bibliography{./refs}

\def\polhk#1{\setbox0=\hbox{#1}{\ooalign{\hidewidth
  \lower1.5ex\hbox{`}\hidewidth\crcr\unhbox0}}}
\begin{thebibliography}{CGRS15}
\expandafter\ifx\csname url\endcsname\relax
  \def\url#1{\texttt{#1}}\fi
\expandafter\ifx\csname urlprefix\endcsname\relax\def\urlprefix{URL }\fi

\bibitem[ACQ11]{MR2796514}
\textsc{G.~Amir}, \textsc{I.~Corwin}, and \textsc{J.~Quastel}.
\newblock Probability distribution of the free energy of the continuum directed
  random polymer in {$1+1$} dimensions.
\newblock \emph{Comm. Pure Appl. Math.} \textbf{64}, no.~4, (2011), 466--537.
\newblock \ifx\href\undefined
  \texttt{doi:10.1002/cpa.20347}\else\href{http://dx.doi.org/10.1002/cpa.20347}{\texttt{doi:10.1002/cpa.20347}}\fi.

\bibitem[AKQ10]{AKQ}
\textsc{T.~{Alberts}}, \textsc{K.~{Khanin}}, and \textsc{J.~{Quastel}}.
\newblock {Intermediate Disorder Regime for Directed Polymers in Dimension
  1+1}.
\newblock \emph{Physical Review Letters} \textbf{105}, no.~9, (2010), 090603.
\newblock \ifx\href\undefined
  \texttt{arXiv:1003.1885}\else\href{http://arxiv.org/abs/1003.1885}{\texttt{arXiv:1003.1885}}\fi.
\newblock \ifx\href\undefined
  \texttt{doi:10.1103/PhysRevLett.105.090603}\else\href{http://dx.doi.org/10.1103/PhysRevLett.105.090603}{\texttt{doi:10.1103/PhysRevLett.105.090603}}\fi.

\bibitem[AKQ14]{MR3189070}
\textsc{T.~Alberts}, \textsc{K.~Khanin}, and \textsc{J.~Quastel}.
\newblock The intermediate disorder regime for directed polymers in dimension
  {$1+1$}.
\newblock \emph{Ann. Probab.} \textbf{42}, no.~3, (2014), 1212--1256.
\newblock \ifx\href\undefined
  \texttt{doi:10.1214/13-AOP858}\else\href{http://dx.doi.org/10.1214/13-AOP858}{\texttt{doi:10.1214/13-AOP858}}\fi.

\bibitem[BG97]{BG}
\textsc{L.~Bertini} and \textsc{G.~Giacomin}.
\newblock Stochastic {B}urgers and {KPZ} equations from particle systems.
\newblock \emph{Comm. Math. Phys.} \textbf{183}, no.~3, (1997), 571--607.
\newblock \ifx\href\undefined
  \texttt{doi:10.1007/s002200050044}\else\href{http://dx.doi.org/10.1007/s002200050044}{\texttt{doi:10.1007/s002200050044}}\fi.

\bibitem[CGRS14]{GASEP}
\textsc{G.~Carinci}, \textsc{C.~Giardina}, \textsc{F.~Redig}, and
  \textsc{T.~Sasamoto}.
\newblock A generalized {A}symmetric {E}xclusion {P}rocess with {$U_q
  (\mathfrak{sl}_2) $} stochastic duality.
\newblock \emph{To appear in Probab. Theory Related Fields. arXiv:1407.3367}
  (2014).
\newblock \ifx\href\undefined
  \texttt{doi:10.1007/s00440-015-0674-0}\else\href{http://dx.doi.org/10.1007/s00440-015-0674-0}{\texttt{doi:10.1007/s00440-015-0674-0}}\fi.

\bibitem[CGRS15]{CGRS2015}
\textsc{G.~Carinci}, \textsc{C.~Giardina}, \textsc{F.~Redig}, and
  \textsc{T.~Sasamoto}.
\newblock Asymmetric stochastic transport models with $\mathcal{U}_q
  (\mathfrak{su}(1, 1))$ symmetry.
\newblock \emph{arXiv:1507.01478} (2015).

\bibitem[CP15]{CPHigherSpin}
\textsc{I.~Corwin} and \textsc{L.~Petrov}.
\newblock Stochastic higher spin vertex models on the line.
\newblock \emph{Comm. Math. Phys.} (2015).
\newblock \ifx\href\undefined
  \texttt{doi:10.1007/s00220-015-2479-5}\else\href{http://dx.doi.org/10.1007/s00220-015-2479-5}{\texttt{doi:10.1007/s00220-015-2479-5}}\fi.

\bibitem[CT15]{2015arXiv1505}
\textsc{I.~{Corwin}} and \textsc{L.-C. {Tsai}}.
\newblock {KPZ equation limit of higher-spin exclusion processes}.
\newblock \emph{To appear in Ann. Probab.} (2015).
\newblock \ifx\href\undefined
  \texttt{arXiv:1505.04158}\else\href{http://arxiv.org/abs/1505.04158}{\texttt{arXiv:1505.04158}}\fi.

\bibitem[DG91]{dittrich91}
\textsc{P.~Dittrich} and \textsc{J.~G{\"a}rtner}.
\newblock A central limit theorem for the weakly asymmetric simple exclusion
  process.
\newblock \emph{Mathematische Nachrichten} \textbf{151}, no.~1, (1991), 75--93.

\bibitem[DGP16]{diehl2016kardar}
\textsc{J.~Diehl}, \textsc{M.~Gubinelli}, and \textsc{N.~Perkowski}.
\newblock The {Kardar-Parisi-Zhang} equation as scaling limit of weakly
  asymmetric interacting {B}rownian motions.
\newblock \emph{arXiv preprint arXiv:1606.02331} (2016).

\bibitem[DT16]{DemboTsai}
\textsc{A.~Dembo} and \textsc{L.-C. Tsai}.
\newblock Weakly {A}symmetric {N}on-{S}imple {E}xclusion {P}rocess and the
  {K}ardar--{P}arisi--{Z}hang {E}quation.
\newblock \emph{Comm. Math. Phys.} \textbf{341}, no.~1, (2016), 219--261.
\newblock \ifx\href\undefined
  \texttt{doi:10.1007/s00220-015-2527-1}\else\href{http://dx.doi.org/10.1007/s00220-015-2527-1}{\texttt{doi:10.1007/s00220-015-2527-1}}\fi.

\bibitem[FGS16]{MR3537337}
\textsc{T.~Franco}, \textsc{P.~Gon{\c{c}}alves}, and \textsc{M.~Simon}.
\newblock Crossover to the {S}tochastic {B}urgers {E}quation for the {WASEP}
  with a {S}low {B}ond.
\newblock \emph{Comm. Math. Phys.} \textbf{346}, no.~3, (2016), 801--838.
\newblock \ifx\href\undefined
  \texttt{doi:10.1007/s00220-016-2607-x}\else\href{http://dx.doi.org/10.1007/s00220-016-2607-x}{\texttt{doi:10.1007/s00220-016-2607-x}}\fi.

\bibitem[G{\"a}r88]{MR931030}
\textsc{J.~G{\"a}rtner}.
\newblock Convergence towards {B}urgers' equation and propagation of chaos for
  weakly asymmetric exclusion processes.
\newblock \emph{Stochastic Process. Appl.} \textbf{27}, no.~2, (1988),
  233--260.
\newblock \ifx\href\undefined
  \texttt{doi:10.1016/0304-4149(87)90040-8}\else\href{http://dx.doi.org/10.1016/0304-4149(87)90040-8}{\texttt{doi:10.1016/0304-4149(87)90040-8}}\fi.

\bibitem[GJ13]{MR3327509}
\textsc{M.~Gubinelli} and \textsc{M.~Jara}.
\newblock Regularization by noise and stochastic {B}urgers equations.
\newblock \emph{Stoch. Partial Differ. Equ. Anal. Comput.} \textbf{1}, no.~2,
  (2013), 325--350.
\newblock \ifx\href\undefined
  \texttt{doi:10.1007/s40072-013-0011-5}\else\href{http://dx.doi.org/10.1007/s40072-013-0011-5}{\texttt{doi:10.1007/s40072-013-0011-5}}\fi.

\bibitem[GJ14]{MR3176353}
\textsc{P.~Gon{\c{c}}alves} and \textsc{M.~Jara}.
\newblock Nonlinear fluctuations of weakly asymmetric interacting particle
  systems.
\newblock \emph{Arch. Ration. Mech. Anal.} \textbf{212}, no.~2, (2014),
  597--644.
\newblock \ifx\href\undefined
  \texttt{doi:10.1007/s00205-013-0693-x}\else\href{http://dx.doi.org/10.1007/s00205-013-0693-x}{\texttt{doi:10.1007/s00205-013-0693-x}}\fi.

\bibitem[GJ16]{gonccalves2016stochastic}
\textsc{P.~Gon{\c{c}}alves} and \textsc{M.~Jara}.
\newblock Stochastic {B}urgers equation from long range exclusion interactions.
\newblock \emph{arXiv preprint arXiv:1606.06655} (2016).

\bibitem[GJS15]{goncalves15}
\textsc{P.~Gon{\c{c}}alves}, \textsc{M.~Jara}, and \textsc{S.~Sethuraman}.
\newblock A stochastic {B}urgers equation from a class of microscopic
  interactions.
\newblock \emph{Ann. Probab.} \textbf{43}, no.~1, (2015), 286--338.

\bibitem[GP15a]{gubinelli15}
\textsc{M.~Gubinelli} and \textsc{N.~Perkowski}.
\newblock Energy solutions of {KPZ} are unique.
\newblock \emph{arXiv:1508.07764} (2015).

\bibitem[GP15b]{gubinelli2015kpz}
\textsc{M.~Gubinelli} and \textsc{N.~Perkowski}.
\newblock Kpz reloaded.
\newblock \emph{arXiv preprint arXiv:1508.03877} (2015).

\bibitem[GP15c]{MR3445609}
\textsc{M.~Gubinelli} and \textsc{N.~Perkowski}.
\newblock \emph{Lectures on singular stochastic {PDE}s}, vol.~29 of
  \emph{Ensaios Matem\'aticos [Mathematical Surveys]}.
\newblock Sociedade Brasileira de Matem\'atica, Rio de Janeiro, 2015.

\bibitem[GP16]{gubinelli2016hairer}
\textsc{M.~Gubinelli} and \textsc{N.~Perkowski}.
\newblock The {Hairer-Quastel} universality result in equilibrium.
\newblock \emph{arXiv preprint arXiv:1602.02428} (2016).

\bibitem[GS15]{gonccalves2015second}
\textsc{P.~Gon{\c{c}}alves} and \textsc{M.~Simon}.
\newblock Second order {Boltzmann-Gibbs} principle for polynomial functions and
  applications.
\newblock \emph{arXiv preprint arXiv:1507.06076} (2015).

\bibitem[HQ15]{KPZJeremy}
\textsc{M.~Hairer} and \textsc{J.~Quastel}.
\newblock A class of growth models rescaling to {KPZ}.
\newblock \emph{arXiv preprint arXiv:1512.07845} (2015).

\bibitem[HS15]{CLTKPZ}
\textsc{M.~Hairer} and \textsc{H.~Shen}.
\newblock A central limit theorem for the {KPZ} equation.
\newblock \emph{arXiv preprint arXiv:1507.01237} (2015).

\bibitem[Lab16a]{labbe2016scaling}
\textsc{C.~Labb{\'e}}.
\newblock On the scaling limits of weakly asymmetric bridges.
\newblock \emph{arXiv preprint arXiv:1609.05617} (2016).

\bibitem[Lab16b]{labbe2016weakly}
\textsc{C.~Labb{\'e}}.
\newblock Weakly asymmetric bridges and the {KPZ} equation.
\newblock \emph{arXiv preprint arXiv:1603.03560} (2016).

\bibitem[Lig12]{liggett12}
\textsc{T.~Liggett}.
\newblock \emph{Interacting particle systems}.
\newblock Springer Science \& Business Media, 2012.

\bibitem[SS09]{MR2570756}
\textsc{T.~Sasamoto} and \textsc{H.~Spohn}.
\newblock Superdiffusivity of the 1{D} lattice {K}ardar-{P}arisi-{Z}hang
  equation.
\newblock \emph{J. Stat. Phys.} \textbf{137}, no. 5-6, (2009), 917--935.
\newblock \ifx\href\undefined
  \texttt{doi:10.1007/s10955-009-9831-0}\else\href{http://dx.doi.org/10.1007/s10955-009-9831-0}{\texttt{doi:10.1007/s10955-009-9831-0}}\fi.

\end{thebibliography}

\end{document}